\begin{document}

\title{Convolution Products and $R$-Matrices for KLR Algebras of Type $B$}
\author{Ruari Walker}
%\date{\today}
\address{Universit\'{e} Paris Diderot-Paris VII \\ B\^{a}timent Sophie Germain \\ 75205 Paris Cedex 13 \\ France}
\email{ruari.walker@imj-prg.fr}
\maketitle

\thispagestyle{empty}

\begin{abstract}
In this paper we define and study convolution products for modules over certain families of VV algebras. We go on to study morphisms between these products which yield solutions to the Yang-Baxter equation so that in fact these morphisms are $R$-matrices. We study the properties that these $R$-matrices have with respect to simple modules with the hope that this is a first step towards determining the existence of a (quantum) cluster algebra structure on a natural quotient of $\btg$, the $\mathbb{Q}(q)$-algebra defined by Enomoto and Kashiwara, which the VV algebras categorify. 
\end{abstract}

\section*{Introduction}
The \let\thefootnote\relax\footnote{Supported in part by the European Research Council in the framework of the
The European Union H2020 with the Grant ERC 647353 QAffine.} Khovanov-Lauda-Rouquier algebras are a family of graded algebras that have been introduced in \cite{KhovanovLauda} and \cite{Rouquier2KacMoody} in order to categorify the negative half of the quantum group $U_q(\lie)$ associated to a Kac-Moody algebra $\lie$. More specifically, there is a family of graded algebras $\{ \textbf{R}(n) \}_{n \in \mathbb{Z}_{\geq 0}}$ and a $\mathbb{Z}[q,q^{-1}]$-algebra isomorphism between the Grothendieck ring $K_0(\mathscr{C})$ and $U^-_q(\lie)_{\mathbb{Z}[q^{\pm 1}]}^\vee$, where $\mathscr{C}$ is the category of finite-dimensional graded $\textbf{R}(n)$-modules and $U^-_q(\lie)_{\mathbb{Z}[q^{\pm 1}]}^\vee$ is the integral form of the negative half of $U_q(\lie)$, for a Kac-Moody algebra $\lie$. Moreover, when $\lie$ is symmetric and the ground field is of characteristic 0, it has been shown that isomorphism classes of finite-dimensional self-dual simple modules correspond to upper global basis elements under this categorification (\cite{Rouquier}, \cite{CanonicalBasesAndKLR_Algebras}). The multiplication on $K(\mathscr{C})$ is given by the \textit{convolution product} and $q$ acts by shifting the grading. \\
\hspace*{4em} Let $\mathfrak{n}=\mathfrak{n}_+$ be the positive part of the Kac-Moody algebra $\lie$ in a triangular decomposition and let $\mathcal{A}_q(\mathfrak{n})$ be the associated quantum coordinate ring; a certain subalgebra of $U_q(\mathfrak{n})^\ast$. One can show that the algebras $\mathcal{A}_q(\mathfrak{n})$ and $U^-_q(\lie)$ are in fact isomorphic (see \cite{KKKOII}, Lemma 2.7). Furthermore for each $w \in W$, where $W$ is the Weyl group of $\lie$, $\mathcal{A}_q(\mathfrak{n})$ has a subalgebra denoted by $\mathcal{A}_q(\mathfrak{n}(w))$ and Gei\ss, Leclerc and Schr\"{o}er \cite{GeissLeclercSchroer} showed that, for every $w \in W$, $\mathcal{A}_q(\mathfrak{n}(w))$ admits a quantum cluster algebra structure. We briefly recall the idea of a cluster algebra. Cluster algebras form a class of commutative rings and were introduced by Fomin and Zelevinksy in a series of papers beginning with \cite{FominZelevinskyI}. They were originally introduced in order to study canonical bases of algebraic groups but have since appeared in many other areas of mathematics including Teichm\"{u}ller theory, Poisson geometry, and quiver representations. They are defined by giving a partial set of generators, called an \textit{initial cluster}, together with a process enabling one to obtain all other clusters and hence all generators. Berenstein and Zelevinksy subsequently introduced the definition of a quantum cluster algebra \cite{BerensteinZelevinsky} which also consists of providing an initial cluster $\{ x_1, \ldots, x_n \}$ together with an iterative process to obtain all generators. However, in this case the cluster variables now quantum-commute; $x_ix_j=q^{\lambda_{ij}}x_jx_i$, where $(\lambda_{ij})_{1\leq i,j\leq n}$ is a skew-symmetric $n\times n$ matrix with integer entries. Monomials in the variables of a quantum cluster $\{ x_1, \ldots, x_n \}$ are called \textit{(quantum) cluster monomials}. Gei\ss, Leclerc and Schr\"{o}er conjectured that, up to a power of $q^{\frac{1}{2}}$, quantum cluster monomials of $\mathcal{A}_{q^{\frac{1}{2}}}(\mathfrak{n}(w))$ are real global basis elements, i.e.\ if $b$ is an element of the upper global basis then $b^2$ is an element of the upper global basis. Using KLR algebras, and the work of Varagnolo-Vasserot and Rouquier stated above, this conjecture now states that quantum cluster monomials correspond to real simple self-dual modules, where a simple module $M$ over a KLR algebra is \textit{real} if the convolution product $M\circ M$ is a simple module. This is reminiscent of the definition of a monoidal categorification of a cluster algebra given by Hernandez and Leclerc in \cite{HernandezLeclerc}; let $C$ be a cluster algebra and let $\mathscr{A}$ be an abelian monoidal category. Then $\mathscr{A}$ is a \textit{monoidal categorification} of $C$ if there is a ring isomorphism $K_0(\mathscr{A})\cong C$ such that $(i)$ the cluster monomials of $C$ correspond to the classes of real simple objects of $\mathscr{A}$ and $(ii)$ the cluster variables of $C$ are the classes of all real prime simple objects of $\mathscr{A}$. Inspired by this Kang, Kashiwara, Kim and Oh introduced the notion of monoidal categorification of quantum cluster algebras \cite{KKKO}, \cite{KKKOII} and proved the conjecture. \\
\hspace*{4em} At the heart of their work, and of fundamental importance, are $R$-matrices for KLR algebras which they use in order to categorify the cluster mutation relations. Briefly, this means there exists a set of simple modules $\{M_k\}_{k \in J}$ in $\mathscr{C}_w$, where $\mathscr{C}_w$ is a certain subcategory of $\textbf{R}$-gmod, labelled by a finite index set $J=J_{\textrm{fr}}\sqcup J_{\textrm{ex}}$ which has frozen and exchangeable parts. This set will correspond to the initial cluster of the quantum cluster algebra $\mathcal{A}_q(\mathfrak{n}(w))$. We can mutate in direction $k\in J_{\textrm{ex}}$; we replace $M_k$ with a simple module $M_k^\prime$ which fits into two certain short exact sequences, using $R$-matrices, so that when we view the Grothendieck group we have a quantum cluster algebra type relation. The $R$-matrices for symmetric KLR algebras were constructed and studied in \cite{KKKI} using intertwining elements; certain elements of KLR algebras satisfying various properties such as the braid relations. $R$-matrices are solutions of the Yang-Baxter equation which we will now briefly recall. The Yang-Baxter equation, together with $R$-matrices, have applications in many areas of mathematics and mathematical physics including (quantum) integrable systems, statistical mechanics, knot theory and quantum groups. There are several variations of the Yang-Baxter equation but we describe the most elementary form here. Let $A$ be an associative unital algebra and let $R\in A\otimes A$ be an invertible element. The Yang-Baxter equation is
\begin{equation*} \label{equation:YB equation}
R_{12}R_{13}R_{23}=R_{23}R_{13}R_{12}
\end{equation*}
where $R_{12}=R\otimes 1$, $R_{23}=1\otimes R$ and  $R_{13}=(S\otimes 1)(1\otimes R)$, where $S:A\otimes A \longrightarrow A \otimes A, u\otimes v \mapsto v\otimes u$ is the switch map. An element $R \in A \otimes A$ satisfying the above equality is called an $R$-matrix. It is often the case that that $R$ is dependent upon a parameter $z$, often called a \textit{spectral parameter}, in which case $R=R(z)$ is called an \textit{affine $R$-matrix}. In this paper, $R$-matrices are homogeneous morphisms between convolution products of modules over KLR algebras and VV algebras which satisfy the Yang-Baxter equation and in fact we will also consider non-invertible $R$-matrices. Very recently progress has been made in various directions in areas involving $R$-matrices, including the work of Kang-Kashiwara-Kim-Oh described above and also work of Maulik and Okounkov \cite{OkounkovMaulik} for example. For an overview of this and other recent work, see \cite{HernandezBourbaki}. 
\\
\\
In 2010 Varagnolo and Vasserot introduced the VV algebras in order to prove a conjecture of Enomoto and Kashiwara \cite{EnomotoKashiwara2} which stated that affine Hecke algebras of type $B$ categorify an irreducible highest weight module $V_\theta(\lambda)$ over a certain algebra $\btg$. Moreover, they conjectured that $V_\theta(\lambda)$ admits a canonical basis and that this basis is in bijection with the set of isomorphism classes of certain simple modules over affine Hecke algebras of type $B$. Enomoto and Kashiwara proved this in a particular case before Varagnolo and Vasserot proved it in general using VV algebras. In particular they proved that categories of finite-dimensional modules over VV algebras are equivalent to categories of finite-dimensional modules over affine Hecke algebras of type $B$. They showed that there is a $\btg$-module isomorphism between the Grothendieck group of finitely generated graded projective VV algebra modules and $\vtl$. This differs to the categorification theorem for KLR algebras in which the Grothendieck group of finitely generated graded KLR algebra modules possesses a multiplication so that there is an algebra isomorphism between $K_0(\textbf{R}\textrm{-proj})$ and $U_q^-(\lie)_{\mathbb{Z}[q^{\pm 1}]}$. However, VV algebras are closely related to KLR algebras and in fact, under certain conditions, are Morita equivalent (see \cite{Walker}). 
\\
\\
The primary goal of this work is to show that the Grothendieck group of the category of graded VV algebra modules has a multiplication so that it is in fact an algebra. Furthermore, we construct this multiplication in such a way that it is compatible with the convolution product for KLR algebra modules so that we obtain a quantum cluster algebra. As an application of this work, we construct $R$-matrices between products of VV algebra modules which yield solutions to the Yang-Baxter equation and which exhibit particularly attractive properties with respect to certain simple modules. We view this work as a starting point in determining the existence of a quantum cluster algebra structure on a natural quotient of $\btg$. 
\\
\\
We begin by recalling the definitions of the KLR algebras in underlying type $A$ and the VV algebras in Section \ref{section:preliminaries}. In Section \ref{section:R-Matrices} we recall the convolution product and $R$-matrices for KLR algebra modules before describing the difficulties encountered when doing this for modules over VV algebras. Building on work in \cite{Walker} we then describe how to avoid these problems using an equivalence of categories and we illustrate this with a series of small examples. Finally, we define the normalized $R$-matrix for VV algebra modules and study its properties with respect to certain simple modules. We use examples throughout this work to calculate convolution products of modules and to demonstrate that our $R$-matrices can be calculated explicitly. In Section \ref{section:conjectures} we finish with a discussion on monoidal categorification of cluster algebras and we pose various questions surrounding VV algebras and discuss the possible directions in which this work could go. This work and the results have obvious parallels with the type $A$ setting, indeed the convolution product defined here is compatible with the convolution product between KLR algebra modules.  
\subsection*{Acknowledgements}
I thank David Hernandez for many helpful discussions concerning this work and for taking the time to read through a draft of this paper. I also thank Loic Poulain D'Andecy for his comments on an earlier version of this work.
\section{Preliminaries} \label{section:preliminaries}
In this paper $k$ will denote a field with char$(k)\neq 2$ and $k^\times$ will denote the set of non-zero elements of $k$. All modules will be left modules, all gradings will be $\mathbb{Z}$-gradings and $v$ will denote both a formal variable and a functor which shifts the degree by 1. So $vN$ is a graded $A$-module with $r^{\textrm{th}}$ graded component $(vN)_r=N_{r-1}$, for $N=\bigoplus_{i \in \mathbb{Z}} N_i$ a graded $A$-module. For a ring $A$ we write $A$-Mod to denote the category of $A$-modules and $A$-mod for the category of $A$-modules which are finite-dimensional as $k$-vector spaces. For a graded ring $A$ we write $A$-gMod to denote the category of graded $A$-modules and $A$-gmod for the category of finite-dimensional graded $A$-modules. 
\\
\\
We now go on to recall the defining relations of the KLR algebras and VV algebras. 
\subsection{KLR Algebras and VV Algebras} \label{section:vv definition}
\subsubsection*{KLR Algebras}
We first recall the definition of the KLR algebras in underlying type $A$, which were introduced both in \cite{KhovanovLauda} and in \cite{Rouquier2KacMoody}. The following notation and summary can be found in \cite{Walker} for example.
\\
\\
We start by fixing an element $p \in k^{\times}$ and considering the action of $\mathbb{Z}$ on $k^{\times}$ given by $n \cdot \lambda = p^{2n}\lambda$. Fix a $\mathbb{Z}$-orbit $\tilde{I}$ so $\tilde{I}=\tilde{I}_\lambda=\{ p^{2n}\lambda \mid n \in \mathbb{Z} \}$ is the $\mathbb{Z}$-orbit of $\lambda$. We associate to $\tilde{I}_\lambda$ a quiver $\tilde{\Gamma}= \tilde{\Gamma}_{\tilde{I}}$. The vertices of $\tilde{\Gamma}$ are the elements $i\in \tilde{I}$ and for every $i\in \tilde{I}$ we have an arrow $p^2i \longrightarrow i$. We exclude the cases $\pm 1 \in \tilde{I}$ and $p = \pm 1$. Now set $\mathbb{N}\tilde{I}:=\{ \tilde{\nu} = \sum_{i\in \tilde{I}}\tilde{\nu}_i i \mid \tilde{\nu}_i \in \mathbb{Z}_{\geq 0} \textrm{ and } |\supp(\tilde{\nu})| < \infty \hspace{0.2em} \}$, where $\supp(\tilde{\nu}):=\{ i \in \tilde{I} \mid \tilde{\nu}_i \neq 0 \}$. The height of $\tilde{\nu} \in \mathbb{N}\tilde{I}$ is defined to be $|\tilde{\nu}|:=\sum_{i \in \tilde{I}}\tilde{\nu}_i$. For $\tilde{\nu} \in \mathbb{N}\tilde{I}$ with $|\tilde{\nu}|=m$, we set $\tilde{I}^{\tilde{\nu}}:=\{ \textbf{i}=(i_1, \ldots, i_m) \in \tilde{I}^m \mid \sum_{k=1}^m i_k = \tilde{\nu} \}$.
\begin{defn} \label{definition:klr algebra and relations}
The \textbf{KLR algebra} associated to $\tilde{\nu}\in \mathbb{N}\tilde{I}$ with $|\tilde{\nu}|=m$ is denoted by $\textbf{R}_{\tilde{\nu}}$ and is the graded k-algebra generated by the elements 
\begin{equation*}
\{ x_k \}_{1\leq k\leq m} \cup \{ \sigma_l \}_{1\leq l<m} \cup \{ \idemp \}_{\textbf{i} \in \tilde{I}^{\tilde{\nu}}}
\end{equation*}
satisfying the following defining relations. 
\begin{enumerate}
\item $\textbf{e}(\textbf{i})\textbf{e}(\textbf{j}) = \delta_{\textbf{ij}}\textbf{e}(\textbf{i})$, \hspace{4 pt} $\sigma_{k}\textbf{e}(\textbf{i}) = \textbf{e}(s_{k}\textbf{i})\sigma_{k}$, \hspace{4 pt} $x_{l}\textbf{e}(\textbf{i})=\textbf{e}(\textbf{i})x_{l}$, \hspace{4 pt} $\sum_{\textbf{i} \in \tilde{I}^{\tilde{\nu}}} \idemp = 1$.
\item $x_rx_s=x_sx_r \textrm{ for all }1\leq r,s \leq m$.
\vspace*{0.5em}
\item $\sigma_{k}^{2}\textbf{e}(\textbf{i}) = Q_{i_k,i_{k+1}}(x_{k+1},x_k)\idmep$, \hspace{4 pt} $\sigma_{j}\sigma_{k}=\sigma_{k}\sigma_{j} \textrm{ for } j \neq k\pm 1$,\\ \\ \vspace*{0.5em}
$(\sigma_{k+1}\sigma_{k}\sigma_{k+1} - \sigma_{k}\sigma_{k+1}\sigma_{k})\idemp = \left\{
\begin{array}{ll}
\frac{Q_{i_k,i_{k+1}}(x_{k+1}, x_k)-Q_{i_k,i_{k+1}}(x_{k+1}, x_{k+2})}{x_k-x_{k+2}} & \textrm{ if }i_k = i_{k+2} \\
0 & \textrm{ if } i_k\neq i_{k+2}.
\end{array} \right.$ \vspace*{0.5em}
\item $(\sigma_k x_l - x_{s_k(l)}\sigma_k)\idemp=\left\{
\begin{array}{l l l}
			-\idemp & \quad \textrm{ if } l=k, i_k=i_{k+1}\\
			\idemp & \quad \textrm{ if } l=k+1, i_k=i_{k+1}\\
			0 & \quad \textrm{ else.}
\end{array} \right. $\\
\end{enumerate}
\end{defn}
The grading on $\textbf{R}_{\tilde{\nu}}$ is given via
\begin{equation*}
\begin{split}
&\textrm{deg}(\idemp)=0, \quad \textrm{deg}(x_l \idemp)=2, \\
&\textrm{deg}(\sigma_k \idemp)=\left\{
\begin{array}{l l l}	
|i_k \rightarrow i_{k+1}|+|i_{k+1} \rightarrow i_k| & \quad \textrm{ if  } i_k \neq i_{k+1} \\
-2 & \quad \textrm{ if  } 	i_k = i_{k+1}	
\end{array} \right. 
\end{split}
\end{equation*}
where 
\begin{equation*}
Q_{i,j}(x,y)=\left\lbrace
\begin{array}{ll}
(-1)^{|i\rightarrow j|}(x-y)^{|i\rightarrow j|+|j\rightarrow i|} & \textrm{ if }i\neq j \\
0 & \textrm{ if }i=j
\end{array}
\right.
\end{equation*}
and where $|i \rightarrow j|$ represents the number of arrows in $\tilde{\Gamma}$ which have origin $i$ and target $j$.
\\
\\
When $\tilde{\nu}=0$ we put $\textbf{R}_{\tilde{\nu}}=k$ as a graded $k$-algebra.
\\
\\
We also recall some standard notation. For each $w \in \sm$ fix a reduced expression. Let $w=s_{i_1} \cdots s_{i_r}$, where $1 \leq i_k < m$ for all $k$, be this reduced expression. We then define the element $\sigma_w$ by setting 
\begin{equation*}
\sigma_w= \sigma_{i_1}\sigma_{i_2} \cdots \sigma_{i_r}.
\end{equation*}
When $w=1$ is the identity in $\sm$ we have $\sigma_1\idemp=\idemp$ for all $\idemp$. Since reduced expressions of $w$ are not always unique $\sigma_w$ depends upon the choice of reduced expression of $w$.
\subsubsection*{VV Algebras} We now recall the family of graded algebras defined by Varagnolo and Vasserot in \cite{VaragnoloVasserot}. An extended version of this setup and notation can be found in \cite{Walker}. Fix invertible elements $p, q \in k$ such that $p$ is not a power of $q$ and vice versa. Consider the action of $\mathbb{Z} \rtimes \mathbb{Z}_{2}$ on $k^{\times}$ given by $(n,\varepsilon) \cdot \lambda = p^{2n}\lambda^{\varepsilon}$ and fix a $\mathbb{Z} \rtimes \mathbb{Z}_2$-orbit $I_\lambda=\{ p^{2n}\lambda^{\pm 1} \mid n \in \mathbb{Z} \}$. Let $I_\lambda$ be the set of vertices of a quiver $\Gamma=\Gamma_I$ in which, for every $i\in I$, there is an arrow $p^{2}i \longrightarrow i$. Define an involution $\theta$ on $\Gamma$ as follows. On vertices we have $\theta(i)=i^{-1}$ and on arrows we have
\begin{equation*}
\theta(p^{2}i \longrightarrow i)=p^{-2}i^{-1} \longleftarrow i^{-1}, \quad \textrm{ for all } i \in I .
\end{equation*}
We again exclude the cases $\pm 1 \in I$ and $p = \pm 1$ meaning that $\theta$ has no fixed points and $\Gamma$ has no loops (1-cycles). Now set ${^\theta}\mathbb{N}I:=\{ \nu = \sum_{i\in I}\nu_i i \mid \nu_i \in \mathbb{Z}_{\geq 0}, \nu_i= \nu_{\theta(i)} \hspace{0.5em} \forall i \textrm{ and } |\supp(\nu)|<\infty \}$. The height of $\nu \in {^\theta}\mathbb{N}I$ is defined to be $|\nu|=\sum_{i \in I}\nu_i$. Since we require $\nu_i=\nu_{\theta(i)}$ for every $i \in I$ it is always the case that $|\nu| \in 2\mathbb{Z}_{\geq 0}$. For $\nu \in {^\theta}\mathbb{N}I$, we set
\begin{equation*}
{^\theta}I^\nu:=\left\lbrace \textbf{i}=(i_1, \ldots, i_m) \in I^m : \nu= \sum_{k=1}^m i_k + \sum_{k=1}^m i_k^{-1}\right\rbrace.
\end{equation*}
\begin{remark} \label{remark:KLR algs from VV data}
We recall here that we are able to define KLR algebras with this data. More specifically, given a $\mathbb{Z} \rtimes \mathbb{Z}_2$-orbit $I_\lambda$ together with the quiver $\Gamma_{I_\lambda}$ we can pick $\tilde{\nu} \in \mathbb{N}I_\lambda$ which yields a KLR algebra $\subklr$. This is described in \cite{Walker}, Remark 1.11.
\end{remark}
\begin{defn} \label{definition:vv algebra}
The \textbf{VV algebra} $\vv$ associated to $\nu\in {^\theta}\mathbb{N}I$, $|\nu|=2m$ is the graded k-algebra generated by elements 
\begin{equation*}
\{ x_k \}_{1\leq k\leq m} \cup \{ \sigma_l \}_{1\leq l<m} \cup \{ \idemp \}_{\textbf{i} \in {^\theta}I^\nu} \cup \{ \pi \}
\end{equation*}
satisfying the relations given in Definition \ref{definition:klr algebra and relations} together with,
\begin{enumerate}
\item $\pi \textbf{e}(i_1,\ldots,i_m) = \textbf{e}(i_1^{-1},i_2 \ldots,i_m) \pi$, \\ $\pi x_{1} = -x_{1} \pi$, \\ $\pi x_{l} = x_{l} \pi$ \textrm{ if }$l > 1$, \\
$\pi \sigma_{k}=\sigma_{k} \pi$ \textrm{ if }$k\neq 1.$ \vspace*{0.5em}
\item $\pi^{2}\textbf{e}(\textbf{i})=\left\{
\begin{array}{l l l}
    x_1\idemp & \quad i_1=q \\
    -x_1\idemp & \quad i_1=q^{-1} \\
    \idemp & \quad i_1 \neq q^{\pm 1}.
\end{array} \right.$ \vspace*{0.5em}
\item $(\sigma_1 \pi)^2\idemp - (\pi\sigma_1)^2\idemp= \left\{
\begin{array}{l l l}
0 & \quad i_1\neq i_2^{-1}\textrm{ or if } i_1 \neq q^{\pm 1} \\
\sigma_1\idemp & \quad i_1=i_2^{-1}=q \\
-\sigma_1\idemp & \quad i_1=i_2^{-1}=q^{-1}
\end{array} \right. $
\end{enumerate}
\end{defn}
The grading on $\mathfrak{W}_{\nu}$ is defined as follows.
\begin{equation*}
\begin{split}
&\textrm{deg}(\idemp)=0, \quad \textrm{deg}(x_l \idemp)=2, \\
&\textrm{deg}(\pi\idemp)=\left\{
\begin{array}{l l}	
1 & \quad \textrm{ if  } i_1=q^{\pm 1} \\
0 & \quad \textrm{ if  } i_1\neq q^{\pm 1},	
\end{array} \right. \\
&\textrm{deg}(\sigma_k \idemp)=\left\{
\begin{array}{l l l}	
|i_k \rightarrow i_{k+1}|+|i_{k+1} \rightarrow i_k| & \quad \textrm{ if  } i_k \neq i_{k+1} \\
-2 & \quad \textrm{ if  } 	i_k = i_{k+1}.	
\end{array} \right. 
\end{split}
\end{equation*}
where $|i_{k} \rightarrow i_{k+1}|$ represents the number of arrows in the quiver $\Gamma$ which have origin $i_{k}$ and target $i_{k+1}$ . 
\\
\\
When $\nu=0$ we put $\vv=k$ as a graded $k$-algebra.
\\
\\
We denote the support of $\nu = \sum_{i \in I} \nu_i i \in \thetani$ by $\supp(\nu):=\{ i \in I \mid \nu_i \neq 0 \}$. Throughout this paper we denote by $W^B_m$ the Weyl group of type $B_m$. It has generators $s_0, s_1, \ldots, s_{m-1}$ which are subject to the relations
\begin{center}
\begin{tabular}{ll}
$s_i^2=1 \hspace*{0.7em}\forall i$, & $s_is_{i+1}s_i=s_{i+1}s_is_{i+1} \hspace*{0.3em} \textrm{ for } 1 \leq i \leq m-2$,\\
$s_0s_1s_0s_1=s_1s_0s_1s_0$, $\hspace*{2em}$ & $s_is_j=s_js_i \hspace*{0.3em} \textrm{ when } |i-j|>1.$
\end{tabular}
\end{center}
Finally, let us recall a \text{k}-basis of a VV algebra $\vv$. As with reduced expressions of elements of $\sm$ the same remarks made about KLR algebra elements apply to elements of $\vv$; for each element $w \in W^B_m$ we fix a reduced expression $w=s_{i_1}\cdots s_{i_r}$ so that $\sigma_w$ is well-defined. We set $\sigma_0:=\pi$.
\begin{lemma} \label{vv basis}
The following set is a k-basis for $\vv$, where $\nu \in {^\theta}\mathbb{N}I$ and $|\nu|=2m$. 
\begin{equation*}
\lbrace \sigma_w x_1^{n_1}\cdots x_m^{n_m}\idemp \mid w\in W^B_m, \textbf{i} \in {^\theta}I^\nu, n_k \in \mathbb{Z}_{\geq 0} \textrm{ } \forall k\rbrace.
\end{equation*}
\end{lemma}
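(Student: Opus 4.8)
The plan is to prove the two halves of a PBW-type basis statement: that the displayed set spans $\vv$ over $k$, and that it is linearly independent. (Recall that $\sigma_w$ depends on the chosen reduced expression of $w$, so fix one such expression for each $w\in W^B_m$ once and for all; it is these fixed elements that form the claimed basis.) For spanning I would straighten an arbitrary word in the generators $x_k,\sigma_l,\pi,\idemp$ in three stages. First, push every idempotent to the right: relation~(1) of Definition~\ref{definition:klr algebra and relations} and relation~(1) of Definition~\ref{definition:vv algebra} show that moving $\idemp$ past $x_l$, $\sigma_k$ or $\pi$ merely relabels $\textbf{i}$ (with a sign when $\pi$ crosses $x_1$), so a single idempotent survives at the right. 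Second, move every $x_l$ to the right of every $\sigma$ and $\pi$: by relation~(4) of Definition~\ref{definition:klr algebra and relations} and relations~(1)--(2) of Definition~\ref{definition:vv algebra}, commuting an $x$ past a $\sigma_k$ or past $\pi$ produces at most an error term with strictly fewer $\sigma/\pi$ letters, so this terminates, leaving expressions $(\text{word in }\sigma_0=\pi,\sigma_1,\dots,\sigma_{m-1})\cdot x_1^{n_1}\cdots x_m^{n_m}\,\idemp$. Third, straighten the word in $\sigma_0,\dots,\sigma_{m-1}$ into some fixed $\sigma_w$ with $w\in W^B_m$: all of Matsumoto's relations for $W^B_m$ are realized in $\vv$ up to lower terms (the quadratic relations $\sigma_k^2\idemp=Q_{i_k,i_{k+1}}(x_{k+1},x_k)\idemp$ and $\pi^2\idemp\in\{x_1\idemp,-x_1\idemp,\idemp\}$ have polynomial error, i.e.\ underlying element of length $0$, while the braid relations, relation~(3) of Definition~\ref{definition:klr algebra and relations} and relation~(3) of Definition~\ref{definition:vv algebra}, have error supported on strictly shorter elements), so an induction on word length using Tits moves expresses everything in terms of the fixed $\sigma_w$. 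Collecting the three stages shows the set spans.

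For linear independence I would use the faithful polynomial representation. Put $\mathrm{Pol}_\nu=\bigoplus_{\textbf{i}\in{^\theta}I^\nu}k[x_1,\dots,x_m]\,\idemp$ and let $\vv$ act by: $\idemp$ projects onto its summand; $x_l$ acts by multiplication; $\sigma_k$ acts by the $Q$-twisted Demazure (divided-difference) operator exactly as in the KLR case, sending the $\textbf{i}$-summand to the $s_k\textbf{i}$-summand; and $\pi$ acts on $f(x_1,\dots,x_m)\,\idemp$ by $c_{\textbf{i}}(x_1)\cdot f(-x_1,x_2,\dots,x_m)\,\textbf{e}(i_1^{-1},i_2,\dots,i_m)$, where $c_{\textbf{i}}(x_1)$ is $1$, $x_1$, or $-x_1$ according to whether $i_1\neq q^{\pm1}$, $i_1=q$, or $i_1=q^{-1}$, so that $\pi^2$ acts as in relation~(2) of Definition~\ref{definition:vv algebra}. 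One then verifies all the relations of Definitions~\ref{definition:klr algebra and relations} and~\ref{definition:vv algebra}; the KLR relations are classical, the relations involving $\pi$ reduce to short computations, and the only delicate one is the mixed braid relation~(3) of Definition~\ref{definition:vv algebra}, which is precisely what forces the above choice of $c_{\textbf{i}}$. Granting the representation, linear independence follows by the triangularity argument familiar from the KLR setting (cf.\ \cite{KhovanovLauda}, \cite{Rouquier2KacMoody}): with respect to the filtration of $\mathrm{Pol}_\nu$ by degree in the $x$'s and of $W^B_m$ by length, $\sigma_w$ acts with leading symbol the honest action of $w$ on $(x_1,\dots,x_m)$ (divided differences and the $x_1\mapsto-x_1$ twist contributing only lower-degree corrections, once the sign flip is recorded in the type-$B$ bookkeeping), while monomials $x_1^{n_1}\cdots x_m^{n_m}$ are independent in $k[x_1,\dots,x_m]$ and the summands $\idemp$ are pairwise orthogonal; hence the images of the elements $\sigma_w x_1^{n_1}\cdots x_m^{n_m}\idemp$ in $\mathrm{End}_k(\mathrm{Pol}_\nu)$ are linearly independent, and so are the elements themselves.

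The spanning part is essentially bookkeeping; the real obstacle is linear independence, and within it the construction of the $\pi$-action: the factor $c_{\textbf{i}}(x_1)$ has to be chosen so that \emph{simultaneously} $\pi^2$ and $(\sigma_1\pi)^2-(\pi\sigma_1)^2$ come out as in Definition~\ref{definition:vv algebra}, and one must then check that $\pi=\sigma_0$ still has a controllable leading symbol so that the type-$B$ length/Bruhat triangularity argument goes through. As an alternative to building the representation by hand, one could instead transport a known basis: by \cite{Walker} (see Remark~\ref{remark:KLR algs from VV data}) the algebra $\vv$ is tightly linked to KLR algebras, and by \cite{VaragnoloVasserot} to affine Hecke algebras of type $B$, so the statement could also be deduced by pulling back their PBW-type bases and comparing graded dimensions. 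I would nevertheless present the polynomial-representation argument as the primary, self-contained proof.
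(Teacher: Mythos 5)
The paper itself offers no proof of this lemma: it is recalled from \cite{VaragnoloVasserot}, where it is established exactly along the lines you propose, namely a straightening/spanning argument together with faithfulness of a polynomial representation (the same representation this paper later invokes via \cite{VaragnoloVasserot}, Proposition 7.4 and Corollary 7.6, in the proof of Lemma \ref{lemma:varphi satisfies the braid rels}). So your overall architecture, and in particular your spanning argument, is the right one.

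The genuine gap is in the explicit $\pi$-action on which your whole linear-independence argument rests, and which you yourself identify as the crux. With your choice of $c_{\textbf{i}}$ ($x_1$ if $i_1=q$, $-x_1$ if $i_1=q^{-1}$), take $i_1=q$: then $\pi(f\,\idemp)=x_1\,f(-x_1,x_2,\ldots,x_m)\,\textbf{e}(q^{-1},i_2,\ldots,i_m)$, and applying $\pi$ again (the first entry is now $q^{-1}$, so the factor is $-x_1$) gives $(-x_1)\cdot\bigl(-x_1\,f(x_1,\ldots,x_m)\bigr)\idemp=x_1^{2}f\,\idemp$, whereas relation (2) of Definition \ref{definition:vv algebra} demands $x_1 f\,\idemp$; the case $i_1=q^{-1}$ likewise produces $x_1^{2}f\,\idemp$ instead of $-x_1 f\,\idemp$. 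So the representation as specified does not satisfy the relations, and "granting the representation" the triangularity argument has nothing to act through. The actual constraint is that the product of the factor on the $\textbf{e}(i_1^{-1},i_2,\ldots,i_m)$ component with $c_{\textbf{i}}(-x_1)$ must equal $x_1$, $-x_1$, or $1$ in the three cases, which forces an asymmetric choice (for instance $c_{\textbf{i}}=1$ unless $i_1=q^{-1}$, where $c_{\textbf{i}}(x_1)=x_1$); this is, up to normalisation, the Varagnolo--Vasserot action, and with it you must still verify the mixed braid relation (3) of Definition \ref{definition:vv algebra} and redo the leading-symbol bookkeeping for $\sigma_0=\pi$. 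Alternatively, keep your spanning argument and simply quote the faithful polynomial representation of \cite{VaragnoloVasserot} for independence, as the paper does elsewhere. Note that in the paper's later fixed setting $\pm q\notin I$ your factor is identically $1$ and the problem disappears, but the lemma is stated for arbitrary $\nu\in{^\theta}\mathbb{N}I$, so the $i_1=q^{\pm 1}$ cases must be handled correctly.
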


\section{Convolution Products and $R$-Matrices}  \label{section:R-Matrices}
\subsection{The naive approach}\label{subsection:the naive approach}
In this section we recall how convolution products between modules over KLR algebras, together with $R$-matrices for KLR algebras, are defined and we attempt a similar construction for VV algebras. We find that in fact this initial approach does not work in general for VV algebras and we explain why this is. 
\\
\\
We start with the usual setup for VV algebras from which we can define KLR algebras, as outlined in Remark \ref{remark:KLR algs from VV data}. In particular, we have an orbit $I_\lambda=I$ and a corresponding quiver $\Gamma$. There exists a symmetric bilinear form $(\cdot,\cdot)$ on $\mathbb{N}I$ defined by
\begin{equation*}
(i,j)= \left\lbrace
\begin{array}{ll}
-|i \rightarrow j|-|j \rightarrow i| & \textrm{ if }i\neq j \\
2 & \textrm{ if }i=j
\end{array}
\right.
\end{equation*}
where, as before, $|i \rightarrow j|$ denotes the number of arrows in $\Gamma$ starting at $i$ and ending at $j$. Let $[\cdot,\cdot]$ be another symmetric bilinear form on $\mathbb{N}I$ defined by $[i,j]=\delta_{ij}$. Note then that we have $\textrm{deg}(\sigma_k\idemp)=-(i_k,i_{k+1})$, $\textrm{deg}(x_k\idemp)=(i_k,i_k)$ and $\textrm{deg}(\pi\idemp)=[i_1,q^{\pm 1}]$. 
\\
\\
Take $\alpha,\beta \in \mathbb{N}I$. For a graded $\textbf{R}_\alpha$-module $M$ and a graded $\textbf{R}_\beta$-module $N$ the convolution product $M\circ N$ is the $\textbf{R}_{\alpha+\beta}$-module given by
\begin{equation*}
M\circ N := \textbf{R}_{\alpha+\beta}e(\alpha,\beta)\underset{\textbf{R}_\alpha \otimes \textbf{R}_\beta}{\otimes}(M\otimes N)
\end{equation*}
where $e(\alpha,\beta):=\sum_{\substack{\textbf{i}\in I^\alpha \\ \textbf{j} \in I^\beta}}e(\textbf{i}\textbf{j})$ and the right-action of $\textbf{R}_{\alpha}\otimes \textbf{R}_\beta$ on $\textbf{R}_{\alpha+\beta}e(\alpha,\beta)$ is given via the algebra morphism 
\begin{equation*}
\phi:\textbf{R}_\alpha \otimes \textbf{R}_\beta \longrightarrow e(\alpha,\beta)\textbf{R}_{\alpha+\beta} e(\alpha,\beta).
\end{equation*}
We now recall the construction of $R$-matrices for KLR algebras. For $\alpha \in \mathbb{N}I$, $|\alpha|=m$ and $1\leq k <m$ we let $\varphi_k \in \textbf{R}_\alpha$ be defined by
\begin{equation} \label{equation:intertwiners}
\varphi_k\idemp= \left\lbrace
\begin{array}{ll}
(\sigma_kx_k-x_k\sigma_k)\idemp & \textrm{ if } i_k=i_{k+1} \\
\sigma_k\idemp & \textrm{ if }i_k\neq i_{k+1}.
\end{array}
\right.
\end{equation}
These elements of the KLR algebra are called \textit{intertwiners}. Furthermore we recall that, for $m,n \in \mathbb{Z}_{\geq 0}$, the element $w[m,n]\in\mathfrak{S}_{m+n}$ is given by
\begin{equation} \label{equation:the element w[m,n]}
w[m,n](k)=\left\lbrace
\begin{array}{ll}
k+n & \textrm{ if } 1\leq k \leq m \\
k-m & \textrm{ if } m<k\leq m+n.
\end{array}
\right.
\end{equation}
Then the $R$-matrix $R_{M,N}:M\circ N \longrightarrow q^{(\alpha,\beta)-2[\alpha,\beta]}N\circ M$ is a homogeneous $\textbf{R}_{\alpha+\beta}$-module morphism which is obtained by extending the linear map $M\otimes N\longrightarrow q^{(\alpha,\beta)-2[\alpha,\beta]}N\circ M$, $u\otimes v \mapsto \varphi_{w[n,m]}(v\otimes u)$. Note that $w[m,n] \in \mathfrak{S}_{m+n}$ is the longest minimal length left coset representative of $\sm\times\sn$ in $\mathfrak{S}_{m+n}$.
\\
\\
In order to work with VV algebras we must specify whether the parameters $p$ and $q$ lie in $I$. For now, and for the remainder of this paper, we fix the following setting.
\begin{center}
\begin{tcolorbox}[width=2in,
                  boxsep=0pt,
                  left=0pt,
                  right=0pt,
                  top=0.6em]
\begin{center}
\textbf{We assume $\mathbf{\pm p}$, $\mathbf{\pm q\not\in I}$.}
\end{center}
\end{tcolorbox}
\end{center}
%\vspace{0.3em}
In particular this means $\textrm{deg}(\pi\idemp)=0$ for every $\textbf{i} \in \thetainu$. \\
\\
The naive method for constructing convolution products of VV algebra modules would be to mimic the above construction, as follows. Take $\beta, \gamma \in \thetani$, with $|\beta|=2m$ and $|\gamma|=2n$, and let $M$ be a graded $\w_\beta$-module and $N$ a graded $\w_\gamma$-module. Then set
\begin{equation*}
M\circ N:=\w_{\beta+\gamma}e(\beta,\gamma) \underset{\w_\beta\otimes \w_\gamma}{\otimes}(M\otimes N)
\end{equation*}
where $e(\beta,\gamma):=\sum_{\substack{\textbf{i}\in {^\theta}I^\beta \\ \textbf{j} \in {^\theta}I^\gamma}}e(\textbf{i}\textbf{j})$ and where the right-action of $\vvb \otimes \vvg$ on $\vvbg e(\beta,\gamma)$ is defined in a similar way to the KLR algebra setting. However, the map 
\begin{equation*}
\phi:\w_\beta \otimes \w_\gamma \longrightarrow e(\beta,\gamma)\w_{\beta+\gamma} e(\beta,\gamma)
\end{equation*}
is an algebra morphism only when $\supp(\beta)\cap \supp(\gamma)=\emptyset$ and when there are no arrows, in the underlying quiver $\Gamma$, between elements of $\supp(\beta)$ and $\supp(\gamma)$. We demonstrate this explicitly with the following example. We note that under this map $\phi$ we have $\idemp \otimes \pi\idempj \mapsto \sigma_{m}\cdots \sigma_1\pi \sigma_1 \cdots \sigma_{m}\idempij$. 
\begin{example}
Take $\beta=\lambda+\lambda^{-1}$, $\gamma=p^2\lambda +p^{-2}\lambda^{-1}$ and note that there is an arrow $p^2\lambda \longrightarrow \lambda$ in $\Gamma$. Consider $\phi: \w_\beta \otimes \w_\gamma \longrightarrow e(\beta,\gamma)\w_{\beta+\gamma}e(\beta,\gamma)$. On the one hand we have 
\begin{equation*}
\phi(e(\lambda)\otimes e(p^{-2}\lambda^{-1}))=e(\lambda,p^{-2}\lambda^{-1}).
\end{equation*}
But on the other hand we also have
\begin{equation*}
\begin{split}
\phi(e(\lambda)\otimes e(p^{-2}\lambda^{-1}))&=\phi(e(\lambda)\otimes \pi^2e(p^{-2}\lambda^{-1})) \\
&=\phi(e(\lambda)\otimes \pi e(p^2\lambda))\phi(e(\lambda)\otimes \pi e(p^{-2}\lambda^{-1})) \\
&=\sigma_1\pi\sigma_1 e(\lambda,p^2\lambda)\cdot\sigma_1\pi\sigma_1e(\lambda,p^{-2}\lambda^{-1}) \\
&=-(x_1+x_2)e(\lambda,p^{-2}\lambda^{-1}).
\end{split}
\end{equation*}
\end{example}
Similarly one can show that $\phi$ is not a morphism when $\supp(\beta)\cap\supp(\gamma)\neq \emptyset$. Therefore, in general, one does not have a right-action of $\w_\beta \otimes \w_\gamma$ on $\w_{\beta+\gamma}e(\beta,\gamma)$ and the convolution product given above is not well-defined. In the case where one does have an algebra morphism, i.e.\ when $\supp(\beta)\cap \supp(\gamma)=\emptyset$ and there are no arrows between $\supp(\beta)$ and  $\supp(\gamma)$, $\phi$ is in fact an isomorphism (\cite{Walker}, Proposition 2.6). So, until stated otherwise, let us impose these (very strong) restrictions in order to have a well-defined product.
\vspace{0.3em}
\begin{center}
\textbf{We assume both $\supp(\beta)\cap \supp(\gamma)=\emptyset$ \\ and $\supp(\beta) \nleftrightarrow \supp(\gamma)$}
\end{center}
\vspace{0.3em}
where $\supp(\beta) \nleftrightarrow \supp(\gamma)$ indicates that there are no arrows, in the underlying quiver $\Gamma$, between elements of $\supp(\beta)$ and $\supp(\gamma)$. In the following section we will find a method which allows us to drop these restrictions. Take $\beta, \gamma \in \thetani$, with ht$(\beta)=2m$ and ht$(\gamma)=2n$, in such a way that there are no arrows between elements of $\supp(\beta)$ and $\supp(\gamma)$. Then the map $\phi: \w_\beta \otimes \w_\gamma \longrightarrow e(\beta,\gamma)\w_{\beta+\gamma}e(\beta,\gamma)$ is an algebra morphism and we can define the convolution product $M\circ N$ as above, where $M\in \vvb\textrm{-mod}$ and $N\in\vvg\textrm{-mod}$. This product is associative.
\begin{lemma}\label{lemma:M circ N decomposition}
Take $M\in \vvb\textrm{-Mod}$ and $N\in \vvg\textrm{-Mod}$. Let $\mathfrak{S}_{m,n}$ denote the minimal length left coset representatives of $\sm\times \sn$ in $\mathfrak{S}_{m+n}$. Then
\begin{equation*}
M\circ N = \bigoplus_{w \in \mathfrak{S}_{m,n}} \sigma_w(M\otimes N).
\end{equation*}
\end{lemma}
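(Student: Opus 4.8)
The plan is to split the statement into a purely ring-theoretic part — a description of $\vvbg e(\beta,\gamma)$ as a free right module over $e(\beta,\gamma)\vvbg e(\beta,\gamma)\cong\vvb\otimes\vvg$ — and then to apply the functor $-\otimes_{\vvb\otimes\vvg}(M\otimes N)$. Precisely, I would first prove
\[
\vvbg e(\beta,\gamma)=\bigoplus_{w\in\mathfrak{S}_{m,n}}\sigma_w\cdot e(\beta,\gamma)\vvbg e(\beta,\gamma),
\]
each summand being a free right $e(\beta,\gamma)\vvbg e(\beta,\gamma)$-module of rank one; note that $\sigma_w\cdot e(\beta,\gamma)\vvbg e(\beta,\gamma)$ is indeed a right submodule, being the image of the right-module map $b\mapsto\sigma_w b$ on $e(\beta,\gamma)\vvbg e(\beta,\gamma)$. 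Since $\phi\colon\vvb\otimes\vvg\to e(\beta,\gamma)\vvbg e(\beta,\gamma)$ is an isomorphism under our running hypotheses (\cite{Walker}, Proposition 2.6), this makes $\vvbg e(\beta,\gamma)$ free as a right $\vvb\otimes\vvg$-module with basis $\{\sigma_w:w\in\mathfrak{S}_{m,n}\}$, and tensoring gives
\[
M\circ N=\vvbg e(\beta,\gamma)\otimes_{\vvb\otimes\vvg}(M\otimes N)=\bigoplus_{w\in\mathfrak{S}_{m,n}}\sigma_w\otimes_{\vvb\otimes\vvg}(M\otimes N),
\]
which is the asserted decomposition, with $\sigma_w(M\otimes N)$ read as the $w$-th summand inside $M\circ N$.

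To prove the displayed decomposition I would use the $k$-basis of Lemma~\ref{vv basis} together with the coset combinatorics of $W^B_{m+n}$ relative to the subgroup $W^B_m\times W^B_n$ that $\phi(\vvb\otimes\vvg)$ realises on Weyl groups: by the formula $\idemp\otimes\pi\idempj\mapsto\sigma_m\cdots\sigma_1\pi\sigma_1\cdots\sigma_m\idempij$ recalled above, the second $W^B_n$-factor is generated by $s_{m+1},\dots,s_{m+n-1}$ and by the sign change $s_m\cdots s_1s_0s_1\cdots s_m$ of the $(m{+}1)$-st strand, so this is the subgroup of signed permutations preserving the partition $\{1,\dots,m\}\sqcup\{m{+}1,\dots,m{+}n\}$. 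Its minimal-length left coset representatives are exactly the signless shuffles $\mathfrak{S}_{m,n}\subseteq\mathfrak{S}_{m+n}\subseteq W^B_{m+n}$, on which the two length functions agree, and every $w\in W^B_{m+n}$ factors uniquely as $w=xu$ with $x\in\mathfrak{S}_{m,n}$, $u\in W^B_m\times W^B_n$ and $\ell(w)=\ell(x)+\ell(u)$. Choosing reduced words compatibly, $\sigma_w$ equals $\sigma_x\sigma_u$ modulo a $k[x_1,\dots,x_{m+n}]$-combination of $\sigma_{w'}$ with $\ell(w')<\ell(w)$ (the usual comparison of reduced-word choices, commuting the $x_k$'s past $\sigma_x$ using relation~(4) of Definition~\ref{definition:klr algebra and relations}). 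An induction on $\ell(w)$ then shows that every basis element $\sigma_w x_1^{n_1}\cdots x_{m+n}^{n_{m+n}}\idemp$ of $\vvbg e(\beta,\gamma)$ (so $\idemp e(\beta,\gamma)=\idemp$) lies in $\sum_{x\in\mathfrak{S}_{m,n}}\sigma_x\cdot e(\beta,\gamma)\vvbg e(\beta,\gamma)$, giving spanning; and directness follows from a triangularity argument with respect to the length filtration, using that the elements $xu$ are pairwise distinct. This yields the displayed decomposition.

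The step I expect to be the main obstacle is the coset combinatorics just used: $W^B_m\times W^B_n$ is \emph{not} a standard parabolic subgroup of $W^B_{m+n}$ (the standard parabolic obtained by deleting $s_m$ is $W^B_m\times\mathfrak{S}_n$), so the existence of minimal-length coset representatives, the unique length-additive factorisation $w=xu$, and hence the compatible choice of reduced words, must be checked by hand rather than quoted from the standard theory. This is classical for hyperoctahedral groups and their ``block'' Young subgroups, but it is the point requiring care, along with the bookkeeping of the lower-order corrections in the reduced-word comparison. By contrast $\pi$ contributes nothing exotic here: the running assumption $\pm p,\pm q\notin I$ makes $\deg(\pi\idemp)=0$, $\pi^2\idemp=\idemp$ and relation~(3) of Definition~\ref{definition:vv algebra} trivial, so $\pi=\sigma_0$ behaves simply and enters only through generating, with $\sigma_1,\dots,\sigma_{m-1}$, the first $W^B_m$-factor. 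Associativity of $\circ$, already noted above, is not needed for this lemma.
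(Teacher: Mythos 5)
Your proposal is correct and takes essentially the same route as the paper: both identify $\phi(\w_\beta\otimes\w_\gamma)$ with the block (quasi-parabolic) subgroup $W^B_m\times W^B_n$ of $W^B_{m+n}$, use that its minimal-length left coset representatives are precisely the signless shuffles $\mathfrak{S}_{m,n}$ with a length-additive factorisation $x=wy$, and then read the decomposition off the basis of Lemma \ref{vv basis}. The only differences are minor: the paper quotes the quasi-parabolic coset fact from \cite{ChiKinMak} (Proposition 3.3.1) rather than checking it by hand, uses the no-arrows hypothesis to get $\sigma_x=\sigma_w\sigma_y$ on the nose (so no lower-length corrections or triangularity are needed), and obtains directness immediately from the observation that distinct shuffles $w\neq w'$ yield distinct idempotents, $\sigma_w\textbf{e}(\beta,\gamma)=\textbf{e}(\textbf{j})\sigma_w$ with $\textbf{j}\neq\textbf{j}'$.
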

\begin{proof}
From \cite{ChiKinMak}, Proposition 3.3.1 we know that there exist minimal length left coset representatives of the quasi-parabolic subgroup $W^B_m\times W^B_n$ in $W^B_{m+n}$. Denote the set consisting of these elements by $W^B_{m,n}$. Every $x\in W^B_{m+n}$ can be written in the form $x=wy$, where $y \in W^B_m\times W^B_n$ and $w \in W^B_{m,n}$. Fix such a reduced expression for every $x \in W^B_{m+n}$. Then, since there are no arrows between $\supp(\beta)$ and $\supp(\gamma)$, we have $\sigma_x=\sigma_w\sigma_y$. By considering the basis elements of the VV algebra $\vvbg$ it is clear that $M\circ N$ has a basis consisting of elements $\sigma_w(m\otimes n)$, where $m, n$ are basis elements of $M,N$ respectively and $w \in W^B_{m,n}$. Furthermore for $w,w^\prime \in W^B_{m,n}$ such that $w\neq w^\prime$ we have $\sigma_w\textbf{e}(\beta,\gamma)=\idempj\sigma_w$, $\sigma_{w^\prime}\textbf{e}(\beta,\gamma)=\textbf{e}(\textbf{j}^\prime)\sigma_{w^\prime}$ with $\textbf{j}\neq \textbf{j}^\prime$ so that we obtain a direct sum. We finish by noting that $W^B_{m,n}=\smcomman$.
\end{proof}
\begin{remark}
We note that for $w \in \mathfrak{S}_{m,n}$, since there are no arrows between $\supp(\beta)$ and $\supp(\gamma)$, the element $\sigma_we(\beta,\gamma) \in \vvbg$ does not depend on the choice of reduced expression of $w$.
\end{remark}
Following \cite{KKKI} we would like intertwining elements $\varphi_0,\varphi_1,\ldots, \varphi_{n-1}$ of VV algebras in order to define $R$-matrices. The intertwiners $\varphi_1,\ldots, \varphi_{n-1}$ have already been defined for KLR algebras, see (\ref{equation:intertwiners}), so it remains for us to define $\varphi_0$. For every $\textbf{i} \in \thetainu$ we put
\begin{equation*}
\varphi_0\idemp:=\pi\idemp.
\end{equation*}
Then we have the following Lemma.
\begin{lemma}\label{lemma:varphi satisfies the braid rels} \leavevmode
\begin{enumerate}
\item The elements $\{ \varphi_a \}_{0 \leq a < m }$ satisfy the type $B$ braid relations.
\item Fix a reduced expression $w=s_{a_1}\cdots s_{a_k}$ of $w \in W^B_m$ and put $\varphi_w=\varphi_{a_1}\cdots \varphi_{a_k}$. Then $\varphi_w$ is independent of the choice of reduced expression of $w$.
\item For $w \in W^B_m$ and $1 \leq k \leq m$, we have $\varphi_wx_k=x_{w(k)}\varphi_w$ (where we define $x_{1-l}=-x_l$).
\item For $w \in W^B_m$ and $1\leq k<m$, if $w(k+1)=w(k)+1$ then $\varphi_w\sigma_k=\sigma_{|w(k)|}\varphi_w$.
\end{enumerate}
\end{lemma}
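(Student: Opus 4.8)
The plan is to reduce each assertion to the corresponding statement for the type $A$ intertwiners $\varphi_1,\dots,\varphi_{m-1}$ (which are already known, being exactly the KLR intertwiners of Khovanov--Lauda--Rouquier) together with the new relations involving $\varphi_0=\pi$. For (1), the type $A$ braid relations among $\varphi_1,\dots,\varphi_{m-1}$ are standard, so the content is: $\varphi_0\varphi_k=\varphi_k\varphi_0$ for $k\ge 2$, and $\varphi_0\varphi_1\varphi_0\varphi_1=\varphi_1\varphi_0\varphi_1\varphi_0$. The first of these is immediate from the VV relation $\pi\sigma_k=\sigma_k\pi$ for $k\neq 1$ (Definition \ref{definition:vv algebra}(1)) once one checks it respects the idempotent decomposition, since $\varphi_k$ for $k\ge 2$ is a polynomial in $\sigma_k,x_k,x_{k+1}$ and all of $x_2,\dots,x_m$ commute with $\pi$. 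The four-term relation is the crux: here I would work idempotent by idempotent, $\varphi_0\varphi_1\varphi_0\varphi_1\idemp$, splitting into the cases $i_1=i_2$, $i_1=i_2^{-1}$, and the generic case, and in each case expand $\varphi_1\idemp$ via (\ref{equation:intertwiners}) and push the $\pi$'s through using Definition \ref{definition:vv algebra}(1),(2),(3). The key input is relation (3) of Definition \ref{definition:vv algebra}, which controls $(\sigma_1\pi)^2-(\pi\sigma_1)^2$; when $i_1\neq i_2^{\pm1}$ and $i_1\neq q^{\pm1}$ the braid relation is the ``unperturbed'' one, and the correction terms on the right-hand side are precisely designed so that the $x_k$-dressed versions $\varphi$ still braid. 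I expect this verification to be the main obstacle — it is several pages of careful but mechanical polynomial manipulation, and one must be attentive to the sign conventions ($\pi x_1=-x_1\pi$) and to the fact that $\deg(\pi\idemp)=0$ under our standing hypothesis $\pm p,\pm q\notin I$, which is what makes $\varphi_0$ behave like a genuine Coxeter generator rather than a degree-shifted one.

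Given (1), part (2) is formal: by (1) and Matsumoto's theorem (or Tits' theorem on the solvability of the word problem in Coxeter groups), any two reduced expressions for $w\in W^B_m$ are related by a sequence of braid moves, each of which leaves $\varphi_w=\varphi_{a_1}\cdots\varphi_{a_k}$ unchanged; hence $\varphi_w$ is well-defined. I would state this in one line.

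For (3) and (4), I would argue by induction on the length $\ell(w)$, using (2) to reduce to the generators. For (3): when $w=s_a$ with $a\ge1$ the identity $\varphi_a x_k=x_{s_a(k)}\varphi_a$ is the classical KLR computation (relation (4) of Definition \ref{definition:klr algebra and relations} combined with (\ref{equation:intertwiners})), and when $w=s_0$ it reads $\pi x_k=x_{s_0(k)}\pi$, i.e.\ $\pi x_1=-x_1\pi$ and $\pi x_k=x_k\pi$ for $k>1$, which is exactly Definition \ref{definition:vv algebra}(1) under the convention $x_{1-l}=-x_l$; the inductive step writes $w=s_a w'$ with $\ell(w')=\ell(w)-1$ and composes. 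For (4): again take $w=s_a w'$ with $\ell(w)=\ell(w')+1$; the hypothesis $w(k+1)=w(k)+1$ forces a compatible hypothesis on $w'$ at the appropriate position (one must check $a\notin\{|w'(k)|-1,|w'(k)|\}$ or handle the boundary via a braid relation), and then the base cases are the commutations $\varphi_a\sigma_k=\sigma_k\varphi_a$ for $|a-k|>1$, the mixed case handled by the braid relation from (1), and the case $a=0$, $k\ge2$ handled by $\pi\sigma_k=\sigma_k\pi$. The only delicate point in (4) is the appearance of $\sigma_{|w(k)|}$ rather than $\sigma_{w(k)}$: the absolute value reflects that $W^B_m$ acts on $\{\pm1,\dots,\pm m\}$, so when $w(k)<0$ one is really asserting $\varphi_w\sigma_k=\sigma_{-w(k)}\varphi_w$, and I would verify this in the minimal case $w=s_0$, $k=1$ does \emph{not} satisfy the hypothesis $w(k+1)=w(k)+1$ (since $s_0(1)=-1$, $s_0(2)=2$), so in fact the sign ambiguity never triggers a genuinely new computation — every instance reduces to a positive one via the braid relations of (1). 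Thus the real work of the whole lemma is concentrated in the four-term braid relation of part (1).
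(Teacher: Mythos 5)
Your plan for part (1) correctly isolates the new content, but it never actually establishes it: the type $B$ relation $\varphi_0\varphi_1\varphi_0\varphi_1=\varphi_1\varphi_0\varphi_1\varphi_0$ is precisely the crux of the lemma, and your proposal only announces that an idempotent-by-idempotent expansion ``should'' close after several pages of manipulation. That verification is genuinely nontrivial: for instance, starting from $e(i,i^{-1},\ldots)$ the word $\varphi_0\varphi_1\varphi_0\varphi_1$ passes through the component $e(i,i,\ldots)$, where $\varphi_1$ is the dressed element $\sigma_1x_1-x_1\sigma_1$ of (\ref{equation:intertwiners}), so one must interlace relation (3) of Definition \ref{definition:vv algebra} with relation (4) of Definition \ref{definition:klr algebra and relations} and the sign rule $\pi x_1=-x_1\pi$; none of this is carried out, so the heart of the proof is missing. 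The paper takes a different and much shorter route that avoids the computation entirely: by \cite{VaragnoloVasserot}, Proposition 7.4 and Corollary 7.6 the algebra $\vv$ has a faithful polynomial representation in which each $\varphi_a$ acts as the corresponding type $B$ Weyl group generator up to a polynomial factor, and faithfulness then yields (1), and in fact (3) and (4) as well, in one stroke. If you wish to keep a purely relation-theoretic argument you must supply the explicit case analysis; otherwise the representation-theoretic argument is the efficient fix.

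There is also a concrete flaw in your treatment of (4). You dismiss the case of negative $w(k)$ on the grounds that the minimal example $w=s_0$, $k=1$ fails the hypothesis, concluding that ``the sign ambiguity never triggers a genuinely new computation.'' That inference is false: take $w=s_0s_1s_0\in W^B_3$ and $k=1$; then $w$ sends position $1$ to the negative of position $2$ and position $2$ to the negative of position $1$, so under the paper's indexing (the same one that gives the convention $x_{1-l}=-x_l$ in (3)) one has $w(1)=-1$, $w(2)=0$ and the hypothesis $w(k+1)=w(k)+1$ does hold with $w(k)<1$. So the negative case occurs and must be argued honestly, and one must also be careful that the relevant conjugation is by the simple reflection attached to the root $w(\alpha_k)$, not naively by $s_{-w(k)}$ as you write. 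Likewise your inductive step for (3) and (4) asserts, but does not show, that the hypothesis on $w$ descends to $w'$ when $w=s_aw'$ with $\ell(w)=\ell(w')+1$. In the paper both issues are absorbed by the faithful polynomial representation together with the group-theoretic identity relating $ws_k$ and $s_{|w(k)|}w$ under the hypothesis of (4).
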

\begin{proof}
From \cite{VaragnoloVasserot}, Proposition 7.4 and Corollary 7.6 we have a polynomial representation of $\vv$ which is faithful. The action of the $\{ \varphi_a \}_{0\leq a < m}$ in this representation is the same as the action of the type $B$ Weyl group generators (possibly with a polynomial factor). It follows that, since the representation is faithful, the $\varphi_a$ satisfy the braid relations. So $(1)$ is proved and then $(2)$ follows immediately. In fact, $(3)$ also follows from the faithful polynomial representation. Finally, $(4)$ follows from the polynomial representation together with the fact that if $w(k+1)=w(k)+1$ then $ws_k=s_{|w(k)|}w$, for $w\in W^B_m$ and $1\leq k \leq m$.
\end{proof}
For $w \in W^B_m$, the element $\varphi_w\idemp$ has degree 
\begin{equation*}
\textrm{deg}(\varphi_w\idemp)=\sum_{\substack{1\leq a<b\leq m \\ w(a)>w(b)}} (-(i_a,i_b)+2[i_a,i_b])+\sum_{\substack{1\leq a \\ w(a)<1}}[i_a,q^{\pm 1}].
\end{equation*}
Since we are assuming $q \not\in I$ we have $\sum_{\substack{1\leq a \\ w(a)<1}}[i_a,q^{\pm 1}] =0$ and so
\begin{equation*}
\textrm{deg}(\varphi_w\idemp)=\sum_{\substack{1\leq a<b\leq m \\ w(a)>w(b)}} (-(i_a,i_b)+2[i_a,i_b]).
\end{equation*}
For $m,n\in \mathbb{Z}_{\geq 0}$ let $w[m,n]$ be the element of $\mathfrak{S}_{m+n}$ defined by (\ref{equation:the element w[m,n]}) so that $\textrm{deg}(\varphi_{w[n,m]})=2[\beta,\gamma]-(\beta,\gamma)$. Let $M$ be a $\vvb$-module with ht$(\beta)=2m$ and $N$ a $\vvg$-module with ht$(\gamma)=2n$. Using Lemma \ref{lemma:varphi satisfies the braid rels}, we have a $\vvb \otimes \vvg$-linear map $M\otimes N \longrightarrow N\circ M$ given by $u\otimes v \mapsto \varphi_{w[n,m]}(v\otimes u)$. We can make this map homogeneous by shifting the grading of $N\circ M$ by $q^{(\beta,\gamma)-2[\beta,\gamma]}$. This can be extended to a $\vvbg$-module morphism
\begin{equation*}
R_{M,N}:M\circ N \longrightarrow N \circ M
\end{equation*}
where we have omitted the degree shift. Then we obtain commutative diagrams
\begin{center}
\begin{tabular}{cc}
\begin{minipage}{7cm}
\begin{tikzpicture}
\matrix (m) [matrix of math nodes, row sep=4em,
column sep=5em, text height=1.5ex, text depth=0.25ex]
{L\circ M\circ N & M\circ L\circ N \\
  & M\circ N\circ L \\};
\path[-stealth]
(m-1-1) edge node [above] {$R_{L,M}$} (m-1-2)
        edge node [left] {$R_{L,M\circ N}$} (m-2-2)
(m-1-2) edge node [right] {$R_{L,N}$} (m-2-2);
\end{tikzpicture}
\end{minipage}
&
\begin{minipage}{7cm}
\begin{tikzpicture}
\matrix (m) [matrix of math nodes, row sep=4em,
column sep=5em, text height=1.5ex, text depth=0.25ex]
{L\circ M\circ N & L\circ N\circ M \\
  & N\circ L\circ M \\};
\path[-stealth]
(m-1-1) edge node [above] {$R_{M,N}$} (m-1-2)
        edge node [left] {$R_{L\circ M,N}$} (m-2-2)
(m-1-2) edge node [right] {$R_{L,N}$} (m-2-2);
\end{tikzpicture}
\end{minipage}
\end{tabular}
\end{center}
and the maps satisfy the Yang-Baxter equation; $R_{M,N}R_{L,N}R_{L,M}=R_{L,M}R_{L,N}R_{M,N}$.
\\
\\
Take $\beta,\gamma \in \thetani$ of heights $m,n$ respectively and let $M \in \vvb\textrm{-Mod}$, $N\in \vvg\textrm{-Mod}$. We have the following lemma under our assumptions.
\begin{lemma} \label{claim:M and N commute}
When $\supp(\beta)\cap \supp(\gamma)=\emptyset$ and $\supp(\beta) \nleftrightarrow \supp(\gamma)$ we have the following.
\begin{itemize}
\item[(i)] The $R$-matrix $R_{M,N}:M\circ N \longrightarrow N\circ M$ is non-zero.
\item[(ii)] We always have $M\circ N \cong N\circ M$.
\end{itemize}
\end{lemma}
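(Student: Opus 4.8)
The plan is to prove the stronger assertion that, under the hypotheses $\supp(\beta)\cap\supp(\gamma)=\emptyset$ and $\supp(\beta)\nleftrightarrow\supp(\gamma)$, the morphism $R_{M,N}\colon M\circ N\to N\circ M$ is an \emph{isomorphism}, with inverse $R_{N,M}$; assertions $(i)$ and $(ii)$ are then immediate (we assume throughout, as one must, that $M$ and $N$ are nonzero).

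The first step is the observation that, under these hypotheses, all the index crossings in sight are ``transparent''. Call a crossing of adjacent indices $i,j$ transparent if $i\neq j$ and $|i\rightarrow j|=|j\rightarrow i|=0$; for such a crossing Definition \ref{definition:klr algebra and relations} gives $Q_{i,j}=1$, the relevant $\sigma_k$ has degree $0$, the intertwiner coincides with it, and $\sigma_k^2=1$ on that component. Now $w[m,n]\in\mathfrak{S}_{m+n}$ is a Grassmannian permutation, one-line notation $(n+1,\dots,n+m,1,\dots,n)$, whose inversion set is precisely $\{(k,l)\mid 1\le k\le m<l\le m+n\}$; in particular it admits a reduced word realizing it as a sequence of adjacent transpositions each of which, applied to the index tuple of an element of the $w=1$ summand $M\otimes N$ of $M\circ N$, interchanges a strand carrying an index of $\supp(\beta)$ with one carrying an index of $\supp(\gamma)$. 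By the standing hypotheses these crossings are transparent, and since $\varphi_{w[m,n]}$ is independent of the reduced expression (Lemma \ref{lemma:varphi satisfies the braid rels}(2)), its action on $M\otimes N$ is a product of transparent crossings; the identical remark applies to $\varphi_{w[n,m]}$ acting on $N\otimes M\subseteq N\circ M$. Finally, disjointness of supports and the absence of arrows force $(\beta,\gamma)=0$ and $[\beta,\gamma]=0$, so the grading shift attached to $R_{M,N}$ is trivial, consistent with the statement.

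Granting this, the next step is to compute $R_{N,M}\circ R_{M,N}\colon M\circ N\to N\circ M\to M\circ N$. For $u\otimes v$ in the summand $M\otimes N$, using $\vvbg$-linearity of $R_{N,M}$ and $\varphi_{w[n,m]}\in\vvbg$,
\[
R_{N,M}\bigl(R_{M,N}(u\otimes v)\bigr)=R_{N,M}\bigl(\varphi_{w[n,m]}(v\otimes u)\bigr)=\varphi_{w[n,m]}\,\varphi_{w[m,n]}(u\otimes v).
\]
On the components in play, $\varphi_{w[m,n]}$ followed by $\varphi_{w[n,m]}$ is, by Step $1$, a product of transparent crossings $\sigma_k$; these are involutions satisfying the braid relations (Lemma \ref{lemma:varphi satisfies the braid rels}(1)), so two words in them representing the same element of $\mathfrak{S}_{m+n}$ act identically on $M\otimes N$. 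A direct check gives $w[n,m]\,w[m,n]=1$ in $\mathfrak{S}_{m+n}$, hence $\varphi_{w[n,m]}\varphi_{w[m,n]}(u\otimes v)=u\otimes v$. Since $M\circ N=\bigoplus_{w\in\mathfrak{S}_{m,n}}\sigma_w(M\otimes N)$ by Lemma \ref{lemma:M circ N decomposition} and $R_{N,M}\circ R_{M,N}$ is a module morphism, this forces $R_{N,M}\circ R_{M,N}=\mathrm{id}_{M\circ N}$; interchanging the roles of $(\beta,M)$ and $(\gamma,N)$ gives $R_{M,N}\circ R_{N,M}=\mathrm{id}_{N\circ M}$. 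Therefore $R_{M,N}$ is an isomorphism, which yields $(i)$ and $(ii)$ simultaneously.

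The part needing care --- and the main obstacle --- is the bookkeeping underlying Step $1$: one must verify that a suitable reduced word lets every elementary transposition act, throughout, on a configuration in which the two interchanged strands carry one index of $\supp(\beta)$ and one of $\supp(\gamma)$, so that the quadratic relation gives $\sigma_k^2=1$ with no polynomial correction term, and that the reduction of the length-$2mn$ word for $w[n,m]\,w[m,n]$ to the empty word can be performed without ever crossing two $\beta$-indices or two $\gamma$-indices (which need not be transparent). This is guaranteed by the explicit description of the inversion sets of the Grassmannian permutations $w[m,n]$ and $w[n,m]$ given above. As an alternative, for finite-dimensional $M$ and $N$ one can bypass the composite: Step $1$ already shows $R_{M,N}$ is injective on each summand $\sigma_w(M\otimes N)$, hence injective, while $M\circ N$ and $N\circ M$ have equal graded dimension because every relevant degree shift vanishes and $|\mathfrak{S}_{m,n}|=|\mathfrak{S}_{n,m}|$.
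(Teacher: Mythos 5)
Your argument is correct, but it is organized differently from the paper's. The paper proves the lemma directly on the decomposition of Lemma \ref{lemma:M circ N decomposition}: writing every basis summand of $M\circ N$ as $\sigma_w(M\otimes N)$ with $w\in\mathfrak{S}_{m,n}$ a prefix of $\tau(w[n,m])$, it observes that $\varphi_{w[n,m]}=\sigma_{w[n,m]}$, that $\sigma_w\sigma_{w[n,m]}(q\otimes p)\neq 0$ (giving $(i)$), and that $w\mapsto w\cdot w[n,m]$ is a bijection $\mathfrak{S}_{m,n}\to\mathfrak{S}_{n,m}$ with $\sigma_w\sigma_{w[n,m]}=\sigma_{w\,w[n,m]}$, so $R_{M,N}$ carries the summand decomposition of $M\circ N$ bijectively onto that of $N\circ M$ (giving $(ii)$). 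You instead prove the stronger statement that $R_{N,M}$ and $R_{M,N}$ are mutually inverse, via $w[n,m]\,w[m,n]=1$ and the ``transparency'' of all $\beta$--$\gamma$ crossings; this identifies the inverse explicitly, which the paper does not state, while the paper's route gives the explicit matching of basis summands. Note, though, that the two arguments rest on exactly the same point: since $w\in\mathfrak{S}_{m,n}$ and $w[n,m]$ do not have additive lengths (indeed $\ell(w\,w[n,m])=mn-\ell(w)$), the paper's identity $\sigma_w\sigma_{w[n,m]}=\sigma_{w\,w[n,m]}$ is itself only true because the superfluous crossings cancel through $\sigma_k^2\idemp=\idemp$ on components where the crossing joins a $\supp(\beta)$-index to a $\supp(\gamma)$-index --- precisely the bookkeeping you flag. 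Your resolution via inversion sets is sound and can be made completely rigorous by taking the reduced word for $w[n,m]$ to be the reverse of the one for $w[m,n]$, so that $\varphi_{w[n,m]}\varphi_{w[m,n]}\idemp$ telescopes using only transparent quadratic relations (no braid moves are ever needed, since two strands from the same block never cross in the concatenated diagram). One caution on your closing ``alternative'': injectivity of $R_{M,N}$ on each summand $\sigma_w(M\otimes N)$ does not by itself give global injectivity without also knowing the images are independent, so as written that shortcut has a gap; your main argument, however, stands.
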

\begin{proof}
We first note that the element $w[n,m]$ is the longest minimal length left coset representative of $\sn \times \sm$ in $\mathfrak{S}_{n+m}$. Then $\tau(w[n,m])$ is the longest minimal length left coset representative of $\sm \times \sn$ in $\mathfrak{S}_{m+n}$, where $\tau$ is the anti-involution which maps $s_i \mapsto s_i$ for all $i$. Equivalently, $w[n,m]$ is the longest minimal length right coset representative of $\sm \times \sn$ in $\mathfrak{S}_{m+n}$. It has the form
\begin{equation*}
w[n,m]=s_m\cdots s_2s_1 \cdots \cdots s_{m+n-2}\cdots s_ns_{n-1}s_{n+m-1}\cdots s_{n+1}s_n.
\end{equation*}
Then all elements of $\smcomman$ are prefixes of $\tau(w[n,m])$, where a prefix of an element $s_{i_1}\cdots s_{i_n}$ is of the form $s_{i_k}\cdots s_{i_n}$ for some $k$, $1\leq k \leq n$. We also remark that, since $\supp(\beta)\cap \supp(\gamma)=\emptyset$, we have $\varphi_{w[n,m]}=\sigma_{w[n,m]}$ and this is independent of the choice of reduced expression of $w[n,m]$. Then $R_{M,N}$ is given by
\begin{equation*}
\begin{split}
R_{M,N}:M\circ N &\longrightarrow N\circ M \\
\sigma_w(p\otimes q) &\mapsto \sigma_w\sigma_{w[n,m]}(q\otimes p)
\end{split}
\end{equation*}
where $w$ is a prefix of $\tau(w[n,m])$. Finally since $\supp(\beta) \nleftrightarrow \supp(\gamma)$ and $\supp(\beta)\cap \supp(\gamma)=\emptyset$ it follows that $\sigma_w\sigma_{w[n,m]}(q\otimes p) \neq 0$ and so $(i)$ is proved.
\\
\\
For $(ii)$ we first note that for each $w\in \smcomman$ there is an element $w\cdot w[n,m] =y \in \sncommam$ and moreover every element in $\sncommam$ can be obtained in this way. Then we have $\sigma_w\sigma_{w[n,m]}=\sigma_y$ and $R_{M,N}$ is bijective.
\end{proof}
\begin{lemma}
Suppose $\supp(\beta)\cap \supp(\gamma)=\emptyset$, $\supp(\beta) \nleftrightarrow \supp(\gamma)$. If $M \in \vvb\textrm{-Mod}$ and $N \in \vvg\textrm{-Mod}$ are simple modules then $M\circ N$ is a simple $\vvbg$-module.
\end{lemma}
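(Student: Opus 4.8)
The plan is to realise $M\circ N$ as a module induced from the corner algebra $e(\beta,\gamma)\vvbg e(\beta,\gamma)$ and to transport the simplicity of $M\otimes N$ across this induction. Under our standing hypotheses $\supp(\beta)\cap\supp(\gamma)=\emptyset$ and $\supp(\beta)\nleftrightarrow\supp(\gamma)$, \cite{Walker}, Proposition 2.6 tells us that $\phi$ is an \emph{isomorphism} $\vvb\otimes\vvg\longrightarrow e(\beta,\gamma)\vvbg e(\beta,\gamma)$. First I would record three consequences. (i) Since $\supp(\beta)\cap\supp(\gamma)=\emptyset$, in the decomposition $M\circ N=\bigoplus_{w\in\mathfrak{S}_{m,n}}\sigma_w(M\otimes N)$ of Lemma \ref{lemma:M circ N decomposition} only the $w=1$ summand survives multiplication by $e(\beta,\gamma)$, so $e(\beta,\gamma)(M\circ N)$ is, via $\phi$, precisely the $\vvb\otimes\vvg$-module $M\otimes N$. (ii) As $M\circ N=\vvbg e(\beta,\gamma)\otimes_{\vvb\otimes\vvg}(M\otimes N)$, it is generated over $\vvbg$ by $e(\beta,\gamma)(M\circ N)$. (iii) $M\otimes N$ is a simple $\vvb\otimes\vvg$-module; this is where the hypothesis that $M$ and $N$ are simple enters, and I would either restrict to a ground field splitting the relevant endomorphism rings or cite the corresponding fact, so that $\mathrm{End}(M)=k$ or $\mathrm{End}(N)=k$ and simplicity of the tensor product is standard. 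Granting (i)--(iii): if $0\neq X\subseteq M\circ N$ is a submodule and we can show $e(\beta,\gamma)X\neq 0$, then $e(\beta,\gamma)X$ is a nonzero $\vvb\otimes\vvg$-submodule of the simple module $M\otimes N$, hence equals all of it, hence $X\supseteq\vvbg\cdot e(\beta,\gamma)(M\circ N)=M\circ N$ by (ii), giving $X=M\circ N$.

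So the whole content is the claim that $e(\beta,\gamma)X\neq 0$ for every nonzero submodule $X$. To prove it I would take $0\neq x\in X$, write $x=\sum_{w\in\mathfrak{S}_{m,n}}\sigma_w\otimes z_w$ with $z_w\in M\otimes N$ (Lemma \ref{lemma:M circ N decomposition}), choose $w_0$ with $z_{w_0}\neq 0$, act by $\sigma_{w_0^{-1}}$, and project the result onto the $w=1$ summand, i.e.\ compute $e(\beta,\gamma)\sigma_{w_0^{-1}}x$. The key observation — and the place where $\supp(\beta)\nleftrightarrow\supp(\gamma)$ is used decisively — is that whenever a strand coloured by an element of $\supp(\beta)$ crosses one coloured by an element of $\supp(\gamma)$ the polynomial $Q_{i,j}$ of Definition \ref{definition:klr algebra and relations} is the constant $1$, and the correction terms in relations (3) and (4) of that definition all vanish on such crossings; since for $w\in\mathfrak{S}_{m,n}$ the element $\sigma_we(\beta,\gamma)$ is a product of precisely such inter-block crossings, these $\sigma_w$ multiply exactly like symmetric-group elements. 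In particular $\sigma_{w_0^{-1}}\sigma_{w_0}e(\beta,\gamma)=e(\beta,\gamma)$, so the $w_0$-summand of $x$ contributes exactly $z_{w_0}$ to $e(\beta,\gamma)\sigma_{w_0^{-1}}x$; and for $w\neq w_0$ one shows $e(\beta,\gamma)\sigma_{w_0^{-1}}\sigma_we(\beta,\gamma)=0$ because, $\supp(\beta)$ and $\supp(\gamma)$ being disjoint, the $\beta/\gamma$-colouring pattern of $\sigma_{w_0^{-1}}\sigma_we(\beta,\gamma)$ returns to the standard $(\beta,\gamma)$-shape only when the coset representatives $w_0$ and $w$ coincide. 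Hence $e(\beta,\gamma)\sigma_{w_0^{-1}}x=z_{w_0}\neq0$, and since $\sigma_{w_0^{-1}}x\in X$ we are done.

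I expect the last paragraph to be the main obstacle: one has to verify, using the basis of $\vvbg$ from Lemma \ref{vv basis}, both that inter-block crossings compose with no stray polynomial or correction factors (the ``$Q_{i,j}=1$'' phenomenon), and the combinatorial statement that the $\beta/\gamma$-pattern is sorted back to $(\beta,\gamma)$-shape on a unique coset representative — essentially because left cosets of $\mathfrak{S}_m\times\mathfrak{S}_n$ in $\mathfrak{S}_{m+n}$ are in bijection with such patterns. A secondary, softer point is item (iii): simplicity of $M\otimes N$ over $\vvb\otimes\vvg$ genuinely needs a hypothesis on the field (or on $M,N$), which I would fold into the paper's conventions. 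Everything else in the argument — identifying $e(\beta,\gamma)(M\circ N)$ with $M\otimes N$ and the generation statement (ii) — is formal and follows at once from Lemma \ref{lemma:M circ N decomposition} and \cite{Walker}, Proposition 2.6.
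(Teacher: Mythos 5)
Your argument is correct and is essentially the paper's own proof, fleshed out: the paper's two-line sketch rests on exactly your two ingredients, namely the decomposition $M\circ N=\bigoplus_{w\in\mathfrak{S}_{m,n}}\sigma_w(M\otimes N)$ of Lemma \ref{lemma:M circ N decomposition} and the invertibility of the inter-block crossings on the relevant idempotents ($\sigma_v\sigma_w e(\mathbf{i})=e(\mathbf{i})$ for a suitable $v$, which is your $\sigma_{w_0^{-1}}\sigma_{w_0}e(\beta,\gamma)=e(\beta,\gamma)$ together with the $Q_{i,j}=1$ and colour-pattern observations), which drives any nonzero submodule into the $e(\beta,\gamma)$-component $M\otimes N$ and finishes by simplicity there plus generation. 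Your further remark that simplicity of $M\otimes N$ over $\vvb\otimes\vvg$ requires a Schur-type hypothesis (e.g.\ $\End(M)=k$ or $\End(N)=k$) is a genuine point that the paper's proof passes over in silence.
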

\begin{proof}
We know from Lemma \ref{lemma:M circ N decomposition} that $M\circ N=\bigoplus_{w\in \mathfrak{S}_{m,n}} \sigma_w(M\otimes N)$. Since there are no arrows between elements of $\textrm{supp}(\beta)$ and $\textrm{supp}(\gamma)$ we know that for each $\sigma_w\idemp$, $w\in \smcomman$, there exists $v \in \smn$ such that $\sigma_v\sigma_w\idemp=\idemp$.
\end{proof}
As mentioned in the introduction, real simple modules will be essential if we hope to establish a quantum cluster algebra structure on Grothendieck rings of subcategories consisting of modules over VV algebras. But, since we are omitting cases when $\supp(\beta) \cap \supp(\gamma) \neq \emptyset$, the notion of a real simple module does not make sense here (since in this case we have $\beta=\gamma$).
\subsection{Examples}
Here we have a collection of examples of convolution products and $R$-matrices, starting with a KLR algebra example before several VV algebra examples in order to make explicit the discussion thus far.
\begin{notation*}\label{notation}
When $p \not\in I_\lambda$, the quiver $\Gamma_{I_\lambda}$ has two disjoint branches; one with vertex set $I_\lambda^+=\{ p^{2k}\lambda\mid k \in \mathbb{Z} \}$ and the other with vertex set $I_\lambda^-=\{ p^{2k}\lambda^{-1}\mid k \in \mathbb{Z} \}$. In most of the examples in this paper we will write $2k$ for the vertex $p^{2k}\lambda \in I^+_\lambda$ and $\overbar{2k}$ for the vertex $p^{2k}\lambda^{-1} \in I^-_\lambda$.
\end{notation*}
\begin{example}[for KLR algebras]\label{example:type A example}
Consider the underlying type $A$ case. We have $I=I_\lambda$ and the underlying quiver $\Gamma_I$ is
\begin{center}
\begin{tikzpicture}
\matrix (m) [matrix of math nodes, row sep=2.5em,
column sep=2em, text height=1.5ex, text depth=0.25ex]
{ \cdots & p^4\lambda & p^2\lambda & \lambda & p^{-2}\lambda & p^{-4}\lambda & \cdots \\};
\path[->]
(m-1-1) edge  (m-1-2)
(m-1-2) edge  (m-1-3)
(m-1-3) edge  (m-1-4)
(m-1-4) edge  (m-1-5)
(m-1-5) edge  (m-1-6)
(m-1-6) edge  (m-1-7);
\end{tikzpicture}
\end{center}
Following the notational comment above, we write $0$ instead of $\lambda$ and $4$ instead of $p^4\lambda$. Let $\beta=\lambda, \gamma=p^4\lambda\in \mathbb{N}I$. Let $M=\textbf{R}_\beta e(0) /\textrm{rad}(\textbf{R}_\beta e(0)) \in \textbf{R}_\beta$-mod which, as a k-vector space, is 1-dimensional with basis $\{ m=e(0) \}$. Let $N=\textbf{R}_\gamma e(4) /\textrm{rad}(\textbf{R}_\gamma e(4)) \in \textbf{R}_\gamma$-mod which is also 1-dimensional with basis $\{ n=e(4) \}$. Then, as vector spaces, one has
\begin{equation*}
\begin{split}
&M\circ N = \big\langle e(04)(m \otimes n), \sigma_1(m\otimes n) \big\rangle_k \\
&N\circ M = \big\langle e(40)(n \otimes m), \sigma_1(n\otimes m) \big\rangle_k.
\end{split}
\end{equation*}
We now compute the $R$-matrix $R_{M,N}:M\circ N \longrightarrow N\circ M$, noting that $\varphi_{w[1,1]}=\varphi_1=\sigma_1$ in this example.
\begin{equation*}
\begin{split}
R_{M,N}(e(04)(m \otimes n))&=\sigma_1(n \otimes m) \\
R_{M,N}(\sigma_1(m\otimes n))&= e(40)(n \otimes m).
\end{split}
\end{equation*}
So we have 
\begin{equation*}
R_{M,N}=\begin{pmatrix}
0 & 1 \\ 1 & 0
\end{pmatrix}.
\end{equation*}
Here, $R_{M,N}$ is invertible and $M\circ N \cong N \circ M$ demonstrating Lemma \ref{claim:M and N commute} applied to KLR algebras. 
\end{example}
\begin{example}[for VV algebras] \label{example:vv algebra r-matrix 1}
We recall that we are in the setting in which the underlying quiver $\Gamma$ has the form
\begin{center}
\begin{tikzpicture}
\matrix (m) [matrix of math nodes, row sep=2.5em,
column sep=2em, text height=1.5ex, text depth=0.25ex]
{ \cdots & p^2\lambda & \lambda & p^{-2}\lambda & \cdots \\
  \cdots & p^{-2}\lambda^{-1} & \lambda^{-1} & p^2\lambda^{-1} & \cdots \\};
\path[->]
(m-1-1) edge  (m-1-2)
(m-1-2) edge  (m-1-3)
(m-1-3) edge  (m-1-4)
(m-1-4) edge  (m-1-5)
(m-2-5) edge  (m-2-4)
(m-2-4) edge  (m-2-3)
(m-2-3) edge  (m-2-2)
(m-2-2) edge  (m-2-1);
\end{tikzpicture}
\end{center}
Let us look at the type $B$ analogue of Example \ref{example:type A example} above. Namely, take $\beta=\lambda+\lambda^{-1}$ and $\gamma=p^4\lambda+p^{-4}\lambda^{-1}$. Let 
\begin{equation*}
M=\w_\beta e(0)/ \textrm{rad}(\w_\beta e(0)) \in \w_\beta\textrm{-mod}. 
\end{equation*}
This is a simple module. As a k-vector space it has basis $\{ e(0),\pi e(0) \}$. Similarly, let 
\begin{equation*}
N=\w_\gamma e(4) / \textrm{rad}(\w_\gamma e(4)) \in \w_\gamma\textrm{-mod.} 
\end{equation*}
Then $N$ is also a simple module and has k-basis $\{ e(4),\pi e(4) \}$. Then $M\circ N$ is 8-dimensional with k-basis
\begin{equation*}
\left\lbrace
\begin{tabular}{llll} 
$e(e(0) \otimes e(4))$ & $e(\pi e(0) \otimes e(4))$ & $e(e(0)\otimes \pi e(4))$ & $e(\pi e(0) \otimes \pi e(4))$ \\
$\sigma_1 (e(0) \otimes e(4))$ & $\sigma_1(\pi e(0) \otimes e(4))$ & $\sigma_1(e(0)\otimes \pi e(4))$ & $\sigma_1(\pi e(0) \otimes \pi e(4))$ \\
\end{tabular} \right\rbrace.
\end{equation*}
Similarly, $N \circ M$ is 8-dimensional, with k-basis
\begin{equation*}
\left\lbrace
\begin{tabular}{llll} 
$e(e(4) \otimes e(0))$ & $e(\pi e(4) \otimes e(0))$ & $e(e(4)\otimes \pi e(0))$ & $e(\pi e(4) \otimes \pi e(0))$ \\
$\sigma_1 (e(4) \otimes e(0))$ & $\sigma_1(\pi e(4) \otimes e(0))$ & $\sigma_1(e(4)\otimes \pi e(0))$ & $\sigma_1(\pi e(4) \otimes \pi e(0))$ \\
\end{tabular} \right\rbrace.
\end{equation*}
Here we have $\varphi_{w[1,1]}=\varphi_1=\sigma_1$ and the $R$-matrix $R_{M,N}$ is given by
\begin{equation*} R_{M,N}=
\setcounter{MaxMatrixCols}{20}
\begin{pmatrix} 
0&1&0&0&0&0&0&0 \\ 
1&0&0&0&0&0&0&0 \\ 
0&0&0&0&0&1&0&0 \\ 
0&0&0&0&1&0&0&0 \\ 
0&0&0&1&0&0&0&0 \\ 
0&0&1&0&0&0&0&0 \\ 
0&0&0&0&0&0&0&1 \\ 
0&0&0&0&0&0&1&0
\end{pmatrix}
\end{equation*}
and is invertible so that $M\circ N \cong N\circ M$. We see that, up to a reordering of basis elements, this is just a larger version of Example \ref{example:type A example} as one might expect.
\\
\\
We look at another type $B$ example.
\end{example}
\begin{example}[for VV algebras] 
Take $\beta=\lambda+\lambda^{-1}+p^2\lambda+p^{-2}\lambda^{-1}$ and $\gamma=p^6\lambda+p^{-6}\lambda^{-1}$. We use the notation from \ref{notation} so we write $0$ instead of $\lambda$ and $2$ instead of $p^2\lambda$. Let
\begin{equation*}
M=\w_\beta e(02)/ \textrm{rad}(\w_\beta e(02)) \in \w_\beta\textrm{-mod}.
\end{equation*}
This is a 4-dimensional simple $\w_\beta$-module with k-basis $\{ e(02), \pi e(02), \sigma_1\pi e(02), \pi\sigma_1\pi e(02) \}$. Let
\begin{equation*}
N=\w_\gamma e(6)/\textrm{rad}(\w_\gamma e(6)) \in \w_\gamma\textrm{-mod}.
\end{equation*}
It is a 2-dimensional simple module with basis $\{ e(6), \pi e(6) \}$. \\
\\
Then one can check that both $M\circ N$ and $N\circ M$ are 24 dimensional and are in fact isomorphic.
\end{example}
\subsection{Induction and Restriction}
From the discussion above we see that requiring $\supp(\beta)\cap\supp(\gamma)=\emptyset$ and no arrows to exist between $\supp(\beta)$ and $\supp(\gamma)$ is too strong a condition to impose. For example, the notion of a real simple module does not exist in this case. So we seek to find an alternative construction for convolution products in $\w\textrm{-Mod}$. 
\\
\\
Since we have assumed $p,q \not\in I$ we have a Morita equivalence between $\klrplus$ and $\vv$ (\cite{Walker}, Theorem 2.10). In other words, there is an equivalence of categories $\klrplus\textrm{-Mod} \sim \vv\textrm{-Mod}$. Given $M \in \vvb\textrm{-Mod}$ and $N \in \vvg\textrm{-Mod}$ let us redefine the convolution product by first restricting to the KLR algebra module category, performing the usual convolution product, before returning to the VV algebra module category, i.e.\ we set $M \circ N := \textrm{Ind}(\textrm{Res}(M)\circ\textrm{Res}(N))$ where $\textrm{Res}(M)\circ\textrm{Res}(N)$ is the convolution product in the KLR algebra module category. This leads us to the following definition.
\begin{defn}
The convolution product of $M \in \vvb\textrm{-Mod}$, $N \in \vvg\textrm{-Mod}$ is given by
\begin{equation*}
M \circ N := \w_{\beta+\gamma}e(\beta^+\gamma^+) \underset{\textbf{R}_{\beta^+}\otimes\textbf{R}_{\gamma^+}}{\otimes} (\Res(M)\otimes \Res(N)).
\end{equation*}
\end{defn}
For $\alpha\in \thetani$ of height $2m$ we write $\alpha^+ \in \mathbb{N}I^+$ for the positive part, which has height $m$. Consider the induction functor $\vva\textbf{e}\otimes_{\textbf{e}\vva\textbf{e}} (-)$, where $\textbf{e}=\sum_{\textbf{i}\in I^{\alpha^+}} \idemp$ is the Morita idempotent. Let us write $R(-)=\textbf{e}\cdot -$ for restriction, noting that this is not to be confused with the notation for an $R$-matrix. One has,
\begin{equation*}
\begin{split}
\textrm{Ind}(\textrm{Res}(M)\circ\textrm{Res}(N)) &=\w_{\beta+\gamma}\textbf{e} \underset{\textbf{e}\w\textbf{e}}{\otimes} (R(M) \circ R(N)) \\ &=\w_{\beta+\gamma}\textbf{e} \underset{\textbf{e}\w\textbf{e}}{\otimes} \Big(\textbf{R}_{\beta^++\gamma^+}e(\beta^+\gamma^+)\underset{\textbf{R}_{\beta^+}\otimes \textbf{R}_{\gamma^+}}{\otimes} (R(M) \otimes R(N))\Big) \\
&\cong \w_{\beta+\gamma}\textbf{e}\w_{\beta+\gamma}e(\beta^+\gamma^+)\underset{\textbf{R}_{\beta^+}\otimes \textbf{R}_{\gamma^+}}{\otimes} (R(M) \otimes R(N)) \\
&\cong \w_{\beta+\gamma}e(\beta^+\gamma^+)\underset{\textbf{R}_{\beta^+}\otimes \textbf{R}_{\gamma^+}}{\otimes} (R(M) \otimes R(N)).
\end{split}
\end{equation*}
We have used the fact that $\textbf{R}_{\beta^++\gamma^+} \cong \textbf{e}\vvbg\textbf{e}$, where $\textbf{e}=\sum_{\textbf{i}\in I^{\beta^++\gamma^+}}\idemp$, in the third line and have used that $\textbf{e}$ is full in $\w_{\beta+\gamma}$ for the final isomorphism.
\\
\\
Note that this is indeed well-defined since there exists an algebra morphism
\begin{equation*}
\chi:\textbf{R}_{\beta^+} \otimes \textbf{R}_{\gamma^+} \longrightarrow e(\beta^+\gamma^+)\w_{\beta+\gamma}e(\beta^+\gamma^+)
\end{equation*}
which is given in the obvious way. 
\\
\\
Let $\mathcal{D}(W^B_{m+n}/(\sm \times \sn))$ denote the minimal length left coset representatives of $\sm \times \sn$ in $W^B_{m+n}$.
\begin{lemma}\label{lemma:the form of the conv prod}
Take $\beta, \gamma \in \thetani$ with $|\beta|=2m$ and $|\gamma|=2n$. For $M\in \vvb\textrm{-Mod}$ and $N \in \vvg\textrm{-Mod}$,
\begin{equation*}
M\circ N=\bigoplus_{w \in \mathcal{D}(W^B_{m+n}/(\sm \times \sn))} \sigma_w(R(M) \otimes R(N)).
\end{equation*}
\end{lemma}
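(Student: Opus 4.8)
The plan is to read the decomposition directly off the defining formula
\[
M\circ N=\w_{\beta+\gamma}e(\beta^+\gamma^+)\underset{\textbf{R}_{\beta^+}\otimes\textbf{R}_{\gamma^+}}{\otimes}(R(M)\otimes R(N)),
\]
so that the entire content reduces to a structural statement about the bimodule $\w_{\beta+\gamma}e(\beta^+\gamma^+)$: I want to show it is \emph{free} as a right $\textbf{R}_{\beta^+}\otimes\textbf{R}_{\gamma^+}$-module (the action being through $\chi$) with a homogeneous basis $\{\sigma_w e(\beta^+\gamma^+)\mid w\in\mathcal{D}(W^B_{m+n}/(\sm\times\sn))\}$ for a fixed, suitably chosen reduced word of each $w$. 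Granting this, tensoring gives $M\circ N=\bigoplus_{w}\sigma_w\otimes(R(M)\otimes R(N))$, and identifying the $w$-summand with the subspace $\sigma_w(R(M)\otimes R(N))\subseteq M\circ N$ — which has $k$-dimension $\dim R(M)\cdot\dim R(N)$, the sum being direct, by freeness — yields exactly the asserted formula. Since we have now dropped the no-arrow hypothesis of Lemma~\ref{lemma:M circ N decomposition}, the subspace $\sigma_w(R(M)\otimes R(N))$ does depend on the chosen reduced word for $w$, so the statement is understood with such choices fixed once and for all.

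To establish the freeness I would run the argument of Lemma~\ref{lemma:M circ N decomposition}, now with the full type $B$ combinatorics. By the VV basis theorem (Lemma~\ref{vv basis}), $\w_{\beta+\gamma}$ has $k$-basis $\sigma_v x_1^{n_1}\cdots x_{m+n}^{n_{m+n}}\idemp$ over $v\in W^B_{m+n}$, $\textbf{i}\in{^\theta}I^{\beta+\gamma}$, $n_k\in\mathbb{Z}_{\geq 0}$; right multiplication by $e(\beta^+\gamma^+)$ annihilates every such element except those in which $\textbf{i}$ is the concatenation of a word in $I^{\beta^+}$ with a word in $I^{\gamma^+}$, and the survivors form a $k$-basis of $\w_{\beta+\gamma}e(\beta^+\gamma^+)$. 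Now $\sm\times\sn$ is a standard parabolic of $W^B_{m+n}$ (generated by $\{s_1,\dots,s_{m-1}\}\sqcup\{s_{m+1},\dots,s_{m+n-1}\}$), so each $v$ factors uniquely as $v=wy$ with $w\in\mathcal{D}(W^B_{m+n}/(\sm\times\sn))$, $y\in\sm\times\sn$ and $\ell(v)=\ell(w)+\ell(y)$; choosing the reduced word of $v$ to be the concatenation of reduced words of $w$ and of $y$, one gets $\sigma_v=\sigma_w\sigma_y$. Finally $\sigma_y$ for $y\in\sm\times\sn$ is a product of the $\sigma_k$ with $k\neq m$, so $\sigma_y x^{\textbf{n}}\idemp$ lies in $\chi(\textbf{R}_{\beta^+}\otimes\textbf{R}_{\gamma^+})$, and as $y,\textbf{n},\textbf{i}$ vary these elements run over the $\chi$-image of the product of the KLR bases of $\textbf{R}_{\beta^+}$ and $\textbf{R}_{\gamma^+}$, hence — $\chi$ being injective, since it is the standard isomorphism $\textbf{R}_{\beta^+}\otimes\textbf{R}_{\gamma^+}\cong e(\beta^+\gamma^+)\textbf{R}_{\beta^++\gamma^+}e(\beta^+\gamma^+)$ followed by the Morita embedding $\textbf{R}_{\beta^++\gamma^+}\cong\textbf{e}\w_{\beta+\gamma}\textbf{e}\hookrightarrow\w_{\beta+\gamma}$ — over a $k$-basis of $\chi(\textbf{R}_{\beta^+}\otimes\textbf{R}_{\gamma^+})$. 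Thus the $k$-basis of $\w_{\beta+\gamma}e(\beta^+\gamma^+)$ reorganizes as the family $\sigma_w\cdot b$ with $w\in\mathcal{D}(W^B_{m+n}/(\sm\times\sn))$ and $b$ running over a $k$-basis of $\chi(\textbf{R}_{\beta^+}\otimes\textbf{R}_{\gamma^+})$, which is precisely the desired freeness.

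Alternatively — and perhaps more transparent for exposition — one can split the computation: the KLR analogue of Lemma~\ref{lemma:M circ N decomposition} (classical, cf.\ \cite{KhovanovLauda}) gives $\textrm{Res}(M)\circ\textrm{Res}(N)=\bigoplus_{u\in\mathfrak{S}_{m,n}}\sigma_u(R(M)\otimes R(N))$, while the ``$\pi$-half'' of the VV basis theorem shows that $\w_{\beta+\gamma}\textbf{e}$ is free over $\textbf{e}\w_{\beta+\gamma}\textbf{e}\cong\textbf{R}_{\beta^++\gamma^+}$ with basis indexed by $\mathcal{D}(W^B_{m+n}/\mathfrak{S}_{m+n})$; applying $\textrm{Ind}$ to the first decomposition and using transitivity of minimal-length coset representatives along $\sm\times\sn\leq\mathfrak{S}_{m+n}\leq W^B_{m+n}$ (with $\mathfrak{S}_{m,n}=\mathcal{D}(\mathfrak{S}_{m+n}/(\sm\times\sn))$) merges the two indexing sets into $\mathcal{D}(W^B_{m+n}/(\sm\times\sn))$ with length-additive factorizations, so that $\sigma_v\sigma_u=\sigma_{vu}$ and the formula drops out. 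In either route the main obstacle is combinatorial bookkeeping rather than anything structural: one must track which idempotents survive multiplication by $e(\beta^+\gamma^+)$, check that reduced expressions can be chosen compatibly so that $\sigma_{wy}=\sigma_w\sigma_y$ (which is exactly the minimal-length property of the coset representatives, and is precisely why the earlier no-arrow hypothesis can now be dropped), and invoke the injectivity of $\chi$ so that the reorganized family is a genuine basis and not merely a spanning set.
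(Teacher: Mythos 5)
Your proof is correct and takes essentially the same route as the paper, whose entire proof is the one-line remark that the decomposition is clear from the bases of VV and KLR algebras; your argument simply spells out that basis comparison (length-additive parabolic factorization $v=wy$ with $w\in\mathcal{D}(W^B_{m+n}/(\sm\times\sn))$, $y\in\sm\times\sn$, combined with Lemma~\ref{vv basis} and the standard KLR basis, with compatible choices of reduced words). One small inaccuracy worth fixing: $\chi$ is not in general the composite of an \emph{isomorphism} $\textbf{R}_{\beta^+}\otimes\textbf{R}_{\gamma^+}\cong e(\beta^+\gamma^+)\textbf{R}_{\beta^++\gamma^+}e(\beta^+\gamma^+)$ with the Morita embedding (that map is only injective once the supports interact), but injectivity is all you actually use, and it already follows from your own observation that the $\chi$-images of the KLR basis elements are distinct members of the fixed VV basis of $\w_{\beta+\gamma}e(\beta^+\gamma^+)$.
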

\begin{proof}
This is clear by looking at the bases of VV algebras and KLR algebras.
\end{proof}
\begin{lemma}
The convolution product for VV algebra modules is associative. 
\end{lemma}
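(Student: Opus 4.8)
The plan is to reduce associativity of the convolution product for VV algebra modules to the associativity of the convolution product for KLR algebra modules, which is standard, by exploiting the fact that $M\circ N = \textrm{Ind}(\Res(M)\circ\Res(N))$ together with good behaviour of $\Res$ and $\textrm{Ind}$ under composition. The ingredient I would isolate first is the identity $\Res\circ\textrm{Ind}\cong\mathrm{id}$: for $\alpha\in\thetani$ of height $2m$, the Morita idempotent $\mathbf{e}=\sum_{\mathbf{i}\in I^{\alpha^+}}\idemp\in\w_\alpha$ satisfies $\mathbf{e}\w_\alpha\mathbf{e}\cong\textbf{R}_{\alpha^+}$ by the Morita equivalence (\cite{Walker}, Theorem~2.10), so for any graded $\textbf{R}_{\alpha^+}$-module $Y$ we have $\Res(\textrm{Ind}(Y)) = \mathbf{e}\w_\alpha\mathbf{e}\otimes_{\mathbf{e}\w_\alpha\mathbf{e}}Y\cong Y$, naturally in $Y$. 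Applied with $Y=\Res(M)\circ\Res(N)$, which is a genuine $\textbf{R}_{\beta^++\gamma^+}$-module and $\beta^++\gamma^+=(\beta+\gamma)^+$, this gives $\Res(M\circ N)\cong\Res(M)\circ\Res(N)$; in other words restriction is compatible with the VV convolution product.

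Now let $M\in\vvb\textrm{-Mod}$, $N\in\vvg\textrm{-Mod}$, $P\in\w_\delta\textrm{-Mod}$ with $|\beta|=2m$, $|\gamma|=2n$, $|\delta|=2p$. Unwinding the definition twice and using the previous paragraph,
\begin{equation*}
(M\circ N)\circ P = \textrm{Ind}\big(\Res(M\circ N)\circ\Res(P)\big) \cong \textrm{Ind}\big((\Res(M)\circ\Res(N))\circ\Res(P)\big),
\end{equation*}
and symmetrically $M\circ(N\circ P)\cong\textrm{Ind}\big(\Res(M)\circ(\Res(N)\circ\Res(P))\big)$. Since the convolution product for KLR algebra modules is associative, $(\Res(M)\circ\Res(N))\circ\Res(P)\cong\Res(M)\circ(\Res(N)\circ\Res(P))$ as $\textbf{R}_{\beta^++\gamma^++\delta^+}$-modules, and applying $\textrm{Ind}$ yields $(M\circ N)\circ P\cong M\circ(N\circ P)$, with no degree shifts to keep track of on either side.

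The only place requiring genuine care is the bookkeeping of which idempotent and which ambient VV (resp.\ KLR) algebra occurs at each stage: the product $M\circ N$ lives over $\w_{\beta+\gamma}$ and is built from the idempotent $e(\beta^+\gamma^+)$, whereas the outer induction uses the full idempotent of $\w_{\beta+\gamma+\delta}$, so one must check that the composite $\Res\circ\textrm{Ind}$ really does collapse because $\mathbf{e}\w_{\beta+\gamma}\mathbf{e}\cong\textbf{R}_{\beta^++\gamma^+}$ is precisely the source of the inner $\textrm{Ind}$. Once this is set up the argument is formal. Alternatively, one could argue directly by applying Lemma~\ref{lemma:the form of the conv prod} twice to see that both iterated products carry a $k$-basis indexed by $\mathcal{D}\big(W^B_{m+n+p}/(\sm\times\sn\times\mathfrak{S}_p)\big)$ together with chosen bases of $\Res(M),\Res(N),\Res(P)$, and then check that the two $\w_{\beta+\gamma+\delta}$-actions coincide under the obvious identification; but the functorial route above is shorter and avoids explicit computation.
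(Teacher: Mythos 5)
Your proposal is correct and takes essentially the same route as the paper: both unwind the definition $M\circ N=\textrm{Ind}(\textrm{Res}(M)\circ\textrm{Res}(N))$, use the Morita-equivalence fact that $\textrm{Res}\circ\textrm{Ind}$ is naturally isomorphic to the identity to identify $\textrm{Res}(M\circ N)$ with $\textrm{Res}(M)\circ\textrm{Res}(N)$, and then invoke associativity of the KLR convolution product before applying $\textrm{Ind}$. Your explicit isolation of the $\textrm{Res}\circ\textrm{Ind}\cong\mathrm{id}$ step is exactly what the paper's chain of equalities uses implicitly.
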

\begin{proof} 
This follows from the associativity of the convolution product for KLR algebra modules, shown below. We write $\circ_B$, $\circ_A$ for the convolution product of VV, KLR algebra modules respectively. Furthermore, we write $I$ for induction and $R$ for restriction. 
\\ 
\\
Suppose we have $\alpha,\beta,\gamma \in \thetani$ and $L\in \w_\alpha\textrm{-Mod}$, $M\in \w_\beta\textrm{-Mod}$, $N\in \w_\gamma\textrm{-Mod}$.
\begin{equation*}
\begin{split}
(M\circ_B L)\circ_B N&=I(R(M)\circ_A R(L))\circ_B N \\
&=I(RI(R(M)\circ_A R(L))\circ_A R(N)) \\
&=I((R(M)\circ_A R(L))\circ_A R(N))\\
&=I(R(M)\circ_A (R(L)\circ_A R(N)))\\
&=I(R(M)\circ_A RI(R(L)\circ_A R(N))) \\
&=M \circ_B I(R(L)\circ_A R(N)) \\
&=M\circ_B (L\circ_B N).
\end{split}
\end{equation*}
\end{proof}
We also remark that the convolution product for VV algebra modules is, by construction, compatible with the convolution product for KLR algebra modules in the sense of the following two lemmas. Let us keep the notation from the previous lemma.
\begin{lemma} Let $M \in \vva\textrm{-Mod}$, $N \in \vvb\textrm{-Mod}$ be VV algebra modules and, $P \in \textbf{R}_\gamma\textrm{-Mod}$ and $Q \in \textbf{R}_\delta\textrm{-Mod}$ be KLR algebra modules. Then 
\begin{equation*}
\begin{split}
\textrm{Res}(M \circ_B N)&=\textrm{Res}(M)\circ_A \textrm{Res}(N) \\
\textrm{Ind}(P \circ_A Q)&=\textrm{Ind}(P)\circ_B \textrm{Ind}(Q).
\end{split}
\end{equation*}
\end{lemma}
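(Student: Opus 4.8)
The plan is to deduce both identities formally from the definition $M\circ_B N:=\textrm{Ind}(\textrm{Res}(M)\circ_A\textrm{Res}(N))$ together with the Morita equivalence of \cite{Walker}, Theorem 2.10. First I would recall the relevant facts about the latter: since $p,q\notin I$, for each $\alpha\in\thetani$ the restriction functor $\textrm{Res}=\textbf{e}\cdot(-)\colon\w_\alpha\textrm{-Mod}\to\textbf{R}_{\alpha^+}\textrm{-Mod}$ and the induction functor $\textrm{Ind}=\w_\alpha\textbf{e}\otimes_{\textbf{e}\w_\alpha\textbf{e}}(-)\colon\textbf{R}_{\alpha^+}\textrm{-Mod}\to\w_\alpha\textrm{-Mod}$ are mutually quasi-inverse equivalences, where $\textbf{e}=\sum_{\textbf{i}\in I^{\alpha^+}}\idemp$ is the full (Morita) idempotent; in particular there are canonical isomorphisms $\textrm{Res}\circ\textrm{Ind}\cong\textrm{id}$ (induced by the multiplication map $\textbf{e}\w_\alpha\textbf{e}\otimes_{\textbf{e}\w_\alpha\textbf{e}}P\to P$) and $\textrm{Ind}\circ\textrm{Res}\cong\textrm{id}$. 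Here, given a KLR module $P\in\textbf{R}_\gamma\textrm{-Mod}$, I regard $\gamma\in\mathbb{N}I^+$ as the positive part of the unique $\hat\gamma\in\thetani$ with $\hat\gamma^+=\gamma$, so that $\textrm{Ind}(P)\in\w_{\hat\gamma}\textrm{-Mod}$ makes sense.

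For the first identity I would simply apply $\textrm{Res}$ to the defining equality and use the isomorphism $\textrm{Res}\circ\textrm{Ind}\cong\textrm{id}$ on $\textbf{R}_{(\alpha+\beta)^+}\textrm{-Mod}$, together with the elementary compatibility $\alpha^++\beta^+=(\alpha+\beta)^+$ of positive parts:
\begin{equation*}
\textrm{Res}(M\circ_B N)=\textrm{Res}\big(\textrm{Ind}(\textrm{Res}(M)\circ_A\textrm{Res}(N))\big)\cong\textrm{Res}(M)\circ_A\textrm{Res}(N).
\end{equation*}
For the second identity I would expand the right-hand side using the definition of $\circ_B$ applied to the VV modules $\textrm{Ind}(P)$, $\textrm{Ind}(Q)$, giving $\textrm{Ind}(P)\circ_B\textrm{Ind}(Q)=\textrm{Ind}\big(\textrm{Res}(\textrm{Ind}(P))\circ_A\textrm{Res}(\textrm{Ind}(Q))\big)$, and then invoke the canonical isomorphisms $\textrm{Res}(\textrm{Ind}(P))\cong P$ and $\textrm{Res}(\textrm{Ind}(Q))\cong Q$ along with the functoriality of $\circ_A$ (immediate from its description as a tensor product) to identify the right-hand side with $\textrm{Ind}(P\circ_A Q)$.

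I do not anticipate a real obstacle here: the lemma is a formal consequence of how $\circ_B$ is built, so the bulk of the write-up is just making the two quasi-inverse functors explicit and noting $\alpha^++\beta^+=(\alpha+\beta)^+$. If one wanted an explicit verification of the first identity rather than the categorical one, one could compare Lemma \ref{lemma:the form of the conv prod}, which gives $\textrm{Res}(M\circ_B N)=\textbf{e}\cdot\bigoplus_{w\in\mathcal{D}(W^B_{m+n}/(\sm\times\sn))}\sigma_w(R(M)\otimes R(N))$ (here $|\alpha|=2m$, $|\beta|=2n$), with $\textrm{Res}(M)\circ_A\textrm{Res}(N)=\bigoplus_{w\in\mathfrak{S}_{m,n}}\sigma_w(R(M)\otimes R(N))$; the two agree because $R(M)\otimes R(N)$ is supported on positive-branch idempotents and $\textbf{e}$ annihilates every summand indexed by a $w$ involving $s_0$, while the remaining $w$ are precisely those in $\mathfrak{S}_{m,n}$. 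The categorical argument is cleaner, so that is the route I would take.
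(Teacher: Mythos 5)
Your argument is correct and is essentially the reasoning the paper has in mind: the paper states this lemma without proof, remarking only that the compatibility holds ``by construction,'' and your write-up simply spells that out via $M\circ_B N=\mathrm{Ind}(\mathrm{Res}(M)\circ_A\mathrm{Res}(N))$ together with the canonical isomorphisms $\mathrm{Res}\circ\mathrm{Ind}\cong\mathrm{id}$ and $\mathrm{Ind}\circ\mathrm{Res}\cong\mathrm{id}$ coming from the Morita equivalence of \cite{Walker}. The explicit check you sketch (killing the summands $\sigma_w(R(M)\otimes R(N))$ with $w$ involving $s_0$ by the idempotent $\textbf{e}$) is consistent with Lemma \ref{lemma:the form of the conv prod} and is a fine alternative, but the categorical route suffices.
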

Let $\vvn:=\bigoplus_{\nu} \vv$ where the direct sum runs over all $\nu \in \thetani$ with $|\nu|=2n$. Then set $\w:=\bigoplus_{n \in \mathbb{Z}_{\geq 0}} \vvn$. Similarly, put $\textbf{R}:=\bigoplus_{m \in \mathbb{Z}_{\geq 0}} \textbf{R}(m)$ and $\textbf{R}(m):=\bigoplus_{\tilde{\nu} } \subklr$ where the second direct sum runs over all $\tilde{\nu} \in \mathbb{N}I^+$ such that $|\tilde{\nu}|=m$.  
\begin{corollary}
There is a $\mathbb{Z}[v,v^{-1}]$-algebra isomorphism of graded algebras 
\begin{equation*}
K_0(\w\textrm{-gmod})\cong K_0(\textbf{R}\textrm{-gmod}).
\end{equation*}
\end{corollary}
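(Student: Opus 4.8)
The plan is to read the statement off the graded Morita equivalence $\vv\textrm{-gMod}\sim\textbf{R}_{\nu^+}\textrm{-gMod}$ of \cite{Walker}, Theorem 2.10, combined with the compatibility of the two convolution products established just above. Since the convolution product $\circ_B$ for VV-algebra modules was \emph{defined} as $\textrm{Ind}\circ(\circ_A)\circ(\textrm{Res}\times\textrm{Res})$, and each of these three functors is exact ($\circ_A$ being exact because $\textbf{R}_{\alpha+\beta}e(\alpha,\beta)$ is projective as a right $\textbf{R}_\alpha\otimes\textbf{R}_\beta$-module, and $\textrm{Ind},\textrm{Res}$ being exact as a Morita pair), the bifunctor $\circ_B$ is exact in each variable, so it descends to a product on $K_0(\w\textrm{-gmod})$; associativity was shown above and the class of the trivial $\w_0=k$-module is a unit, so $K_0(\w\textrm{-gmod})$ is a $\mathbb{Z}[v,v^{-1}]$-algebra, and the same holds for $K_0(\textbf{R}\textrm{-gmod})$.

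First I would record that the Morita functors $\textrm{Res}=\textbf{e}\cdot(-)$ and $\textrm{Ind}=\w\textbf{e}\otimes_{\textbf{e}\w\textbf{e}}(-)$ are graded: the Morita idempotent $\textbf{e}=\sum_{\textbf{i}\in I^{\nu^+}}\idemp$ is homogeneous of degree $0$, so both functors commute with the shift functor $v$. Restricting to finite-dimensional modules, exactness together with the grading makes $[\textrm{Res}]$ and $[\textrm{Ind}]$ mutually inverse isomorphisms of $\mathbb{Z}[v,v^{-1}]$-modules $K_0(\vv\textrm{-gmod})\cong K_0(\textbf{R}_{\nu^+}\textrm{-gmod})$ for every $\nu\in\thetani$. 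Summing over all $\nu$ with $|\nu|=2n$ and over all $n\in\mathbb{Z}_{\geq0}$, and using the bijection $\nu\mapsto\nu^+$ between $\{\nu\in\thetani:|\nu|=2n\}$ and $\{\tilde\nu\in\mathbb{N}I^+:|\tilde\nu|=n\}$, these assemble into a $\mathbb{Z}[v,v^{-1}]$-module isomorphism $K_0(\w\textrm{-gmod})\cong K_0(\textbf{R}\textrm{-gmod})$ which moreover respects the block decompositions indexed by $\nu$ and $\tilde\nu$, hence is an isomorphism of graded $\mathbb{Z}[v,v^{-1}]$-modules. Then I would upgrade it to an algebra map using the compatibility lemma $\textrm{Res}(M\circ_B N)\cong\textrm{Res}(M)\circ_A\textrm{Res}(N)$: on Grothendieck groups this gives, for all classes $[M],[N]$,
\begin{equation*}
[\textrm{Res}]([M]\cdot[N])=[\textrm{Res}(M\circ_B N)]=[\textrm{Res}(M)\circ_A\textrm{Res}(N)]=[\textrm{Res}]([M])\cdot[\textrm{Res}]([N]),
\end{equation*}
so $[\textrm{Res}]$ is multiplicative and unital, and being simultaneously a graded $\mathbb{Z}[v,v^{-1}]$-module isomorphism it is the desired $\mathbb{Z}[v,v^{-1}]$-algebra isomorphism of graded algebras.

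I expect the genuine content of the argument to be entirely carried by Theorem 2.10 of \cite{Walker} and the compatibility lemma, both already available; the \textbf{main obstacle}, such as it is, is bookkeeping: verifying that the Morita equivalence is genuinely graded and degreewise (so that it descends to graded $K_0$ and is $\mathbb{Z}[v,v^{-1}]$-linear rather than merely $\mathbb{Z}$-linear), and that passing to the infinite direct sums $\w$ and $\textbf{R}$ is harmless, i.e.\ that $K_0$ of the direct-sum category is the direct sum of the $K_0$'s compatibly with the block decomposition of the product $M\circ N\in\vvbg\textrm{-gmod}$ given by Lemma \ref{lemma:the form of the conv prod}. No new ideas beyond these inputs are needed.
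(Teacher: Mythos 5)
Your argument is correct and is exactly the one the paper intends: the corollary is stated without proof precisely because it follows from the graded Morita equivalence of \cite{Walker} together with the compatibility lemma $\textrm{Res}(M\circ_B N)=\textrm{Res}(M)\circ_A\textrm{Res}(N)$, which is how you proceed. Your additional bookkeeping (exactness of the functors, degree-zero idempotent, block decomposition over $\nu\mapsto\nu^+$) just makes explicit what the paper leaves implicit.
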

For $\alpha\in \thetani$, $|\alpha|=2m$ we put $\textbf{e}=\sum_{\textbf{i} \in I^{\alpha^+}}\idemp$. Then $\textbf{e}\vva$ is free as a left $\textbf{R}_{\alpha^+}$-module on a basis given by minimal length right coset representatives of $\sm$ in $W^B_m$. It follows that $\Res:\vva\textrm{-mod} \longrightarrow \textbf{R}_{\alpha^+}\textrm{-mod}$; $M \mapsto \textbf{e}M$ sends projective modules to projective modules. We also know that $\Ind: \textbf{R}_{\alpha^+}\textrm{-mod} \longrightarrow \vva\textrm{-mod}$; $N \mapsto \vva\textbf{e}\otimes_{\textbf{R}_{\alpha^+}}N$ sends projective modules to projective modules. These remarks, together with the fact that $\textbf{R}\textrm{-proj}$ is stable under the convolution product, show that $\w\textrm{-proj}$ is also stable under the convolution product and hence $K_0(\w\textrm{-proj})$ is a $\mathbb{Z}[v,v^{-1}]$-algebra.
\\
\\
Suppose $|\beta|=2m$, $|\gamma|=2n$. Let $\w_{\beta^+,\gamma^+}$ denote the image of $\chi$. We define a map $R(M)\otimes R(N) \longrightarrow N\circ M$ by $p\otimes q \mapsto \varphi_{w[n,m]}(q\otimes p)$, where $\varphi_{w[n,m]}$ is the element given in (\ref{equation:the element w[m,n]}). By Lemma \ref{lemma:varphi satisfies the braid rels} this map is $\w_{\beta^+,\gamma^+}$-linear. Then we can extend it to a $\vvbg$-module morphism $M\circ N \longrightarrow N\circ M$. These maps satisfy the commutative diagrams given in the previous subsection and satisfy the Yang-Baxter equation.
\begin{lemma}
Take $\alpha,\beta \in \thetani$, $L \in \vva\textrm{-Mod}$ and $M \in \vvb\textrm{-Mod}$. Then 
\begin{equation*}
R_{L,M}(L\circ_B M)=I(R_{L,M}(R(L)\circ_A R(M))).
\end{equation*}
\end{lemma}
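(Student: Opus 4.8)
The plan is as follows. Inside $\Ind(\cdot)$ on the right-hand side the symbol $R_{L,M}$ must denote the Khovanov--Lauda--Rouquier $R$-matrix $R_{\Res(L),\Res(M)}\colon \Res(L)\circ_A\Res(M)\to\Res(M)\circ_A\Res(L)$, since its arguments there are KLR modules. Write $|\alpha|=2m$ and $|\beta|=2n$, and let $\textbf{e}=\sum_{\textbf{i}\in I^{\alpha^++\beta^+}}\idemp$ be the Morita idempotent, so that $\textbf{e}\w_{\alpha+\beta}\textbf{e}\cong\textbf{R}_{\alpha^++\beta^+}$ and $\Ind=\w_{\alpha+\beta}\textbf{e}\otimes_{\textbf{R}_{\alpha^++\beta^+}}(-)$. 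By the definition of $\circ_B$ we have $L\circ_B M=\Ind(\Res(L)\circ_A\Res(M))$ and $M\circ_B L=\Ind(\Res(M)\circ_A\Res(L))$, so both $R_{L,M}$ and $\Ind(R_{\Res(L),\Res(M)})$ are $\w_{\alpha+\beta}$-module morphisms $L\circ_B M\to M\circ_B L$. I would prove the stronger statement that these two morphisms are \emph{equal}. The lemma then follows immediately: $\Ind$ is one half of the Morita equivalence of \cite{Walker}, Theorem~2.10, hence exact, hence carries the image of a morphism onto the image of the induced morphism, and so $\Ind\big(R_{\Res(L),\Res(M)}(\Res(L)\circ_A\Res(M))\big)=\Ind(R_{\Res(L),\Res(M)})(L\circ_B M)=R_{L,M}(L\circ_B M)$.

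To establish the equality of the two morphisms, I would first use Lemma~\ref{lemma:the form of the conv prod}: $L\circ_B M=\bigoplus_{w\in\mathcal{D}(W^B_{m+n}/(\sm\times\sn))}\sigma_w(\Res(L)\otimes\Res(M))$, so $L\circ_B M$ is generated as a $\w_{\alpha+\beta}$-module by its ``bottom'' subspace $\Res(L)\otimes\Res(M)$ (the $w=1$ summand, which under the identifications above is $e(\alpha^+\beta^+)\otimes(\Res(L)\otimes\Res(M))$). Hence it suffices to check that $R_{L,M}$ and $\Ind(R_{\Res(L),\Res(M)})$ restrict to the same $k$-linear map on this subspace. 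For $R_{L,M}$ this is exactly its defining property: it extends $p\otimes q\mapsto\varphi_{w[n,m]}(q\otimes p)$, a well-defined $\w_{\alpha^+,\beta^+}$-linear map by Lemma~\ref{lemma:varphi satisfies the braid rels}. For $\Ind(R_{\Res(L),\Res(M)})=\mathrm{id}_{\w_{\alpha+\beta}\textbf{e}}\otimes R_{\Res(L),\Res(M)}$ one unwinds the tensor identifications: the element $p\otimes q$ of the bottom of $L\circ_B M$ corresponds to $\textbf{e}\otimes\xi$, where $\xi$ is the image of $p\otimes q$ in the bottom of $\Res(L)\circ_A\Res(M)$, so that applying $\mathrm{id}\otimes R_{\Res(L),\Res(M)}$ yields $\textbf{e}\otimes R_{\Res(L),\Res(M)}(\xi)=\textbf{e}\otimes\varphi_{w[n,m]}(q\otimes p)$, the KLR $R$-matrix being itself the extension of the same assignment over $\textbf{R}_{\alpha^++\beta^+}$. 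Chasing this back through $M\circ_B L\cong\w_{\alpha+\beta}e(\beta^+\alpha^+)\otimes(\Res(M)\otimes\Res(L))$, and using $\textbf{e}\varphi_{w[n,m]}\textbf{e}=\varphi_{w[n,m]}$, returns exactly $\varphi_{w[n,m]}(q\otimes p)$ as an element of $M\circ_B L$, i.e.\ $R_{L,M}(p\otimes q)$.

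I expect the only genuine work to be in this last verification: confirming that the induction functor applied to the KLR $R$-matrix produces the element $\varphi_{w[n,m]}(q\otimes p)$ of $M\circ_B L$ once one passes through the full chain of natural isomorphisms used to define $\circ_B$ --- in particular the collapse $\w_{\alpha+\beta}\textbf{e}\otimes_{\textbf{R}_{\alpha^++\beta^+}}\big(\textbf{R}_{\alpha^++\beta^+}e(\alpha^+\beta^+)\otimes(-)\big)\cong\w_{\alpha+\beta}e(\alpha^+\beta^+)\otimes(-)$ and the compatibility of $\varphi_{w[n,m]}$ with the idempotents $e(\alpha^+\beta^+)$ and $e(\beta^+\alpha^+)$. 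Once explicit generators are fixed this is a routine diagram chase, and the rest of the argument is purely formal; so the proof is short overall.
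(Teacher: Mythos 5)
Your proposal is correct and follows essentially the same route as the paper: both arguments come down to the collapse isomorphism $\w_{\alpha+\beta}\textbf{e}\otimes_{\textbf{e}\w\textbf{e}}\big(\textbf{R}_{\beta^++\alpha^+}e(\beta^+\alpha^+)\otimes(-)\big)\cong \w_{\alpha+\beta}e(\beta^+\alpha^+)\otimes(-)$ together with the fact that both the VV and the KLR $R$-matrices are the module-morphism extensions of $p\otimes q\mapsto\varphi_{w[n,m]}(q\otimes p)$ on the generating subspace $R(L)\otimes R(M)$. The paper simply computes the two images directly and observes they are the same expression, whereas you package the same computation as the morphism-level identity $R_{L,M}=\textrm{Ind}\big(R_{\textrm{Res}(L),\textrm{Res}(M)}\big)$ followed by exactness of $\textrm{Ind}$ --- a mild strengthening in presentation, not a different argument.
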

\begin{proof}
On the one hand, 
\begin{equation*}
\begin{split}
R_{L,M}(L\circ_B M)&=R_{L,M}(\vvab e(\alpha,\beta)\otimes_{\textbf{R}_\alpha\otimes \textbf{R}_\beta} R(L)\otimes R(M)) \\
&=\vvba e(\alpha, \beta)\varphi_{w[m,l]} \otimes_{\textbf{R}_\beta\otimes \textbf{R}_\alpha} R(M)\otimes R(L)
\end{split}
\end{equation*}
whilst on the other hand
\begin{equation*}
\begin{split}
I(R_{L,M}(R(L)\circ_A R(M)))&=I(R_{L,M}(\textbf{R}_{\alpha+\beta}e(\alpha,\beta)\otimes_{\textbf{R}_\alpha \otimes \textbf{R}_\beta} R(L)\otimes R(M))) \\
&=\vvba e\otimes_{e\w e} \textbf{R}_{\beta+\alpha}e(\alpha,\beta)\varphi_{w[m,l]} \otimes_{\textbf{R}_\beta \otimes \textbf{R}_\alpha} (R(M)\otimes R(L)) \\
&\cong \vvba e(\alpha, \beta)\varphi_{w[m,l]} \otimes_{\textbf{R}_\beta \otimes \textbf{R}_\alpha} (R(M)\otimes R(L)). \\
\end{split}
\end{equation*}
\end{proof}
In the following example we compare this convolution product to our naive construction and find they are compatible. That is, the resulting products are isomorphic. Take $\beta,\gamma\in \thetani$ such that $\supp(\beta)\cap\supp(\gamma)=\emptyset$, $\supp(\beta)\nleftrightarrow \supp(\gamma)$ and compute $M\circ N$, as well as the corresponding $R$-matrix, using the restriction-induction method. We first do this using the data from Example \ref{example:vv algebra r-matrix 1}. \\
\\
Let us again write $I$ and $R$ for the induction and restriction functors respectively.
\begin{example}\label{example:VV example}
Let $\beta=\lambda+\lambda^{-1}$, $\gamma=p^4\lambda+p^{-4}\lambda^{-1}$ and consider the corresponding VV algebras. Recall the notation from \ref{notation}. Let 
\begin{equation*}
M=\w_\beta e(0)/ \textrm{rad}(\w_\beta e(0)) \in \w_\beta\textrm{-mod}. 
\end{equation*}
This is a simple module. As a k-vector space it has basis $\{ e(0),\pi e(0) \}$. Then let $R(M):=\textrm{Res}(M) \in \textbf{R}_{\alpha^+}\textrm{-mod}$, which has basis $\{ e(0) \}$. Similarly, let 
\begin{equation*}
N=\w_\gamma e(4) / \textrm{rad}(\w_\gamma e(4)) \in \w_\gamma\textrm{-mod.} 
\end{equation*}
Then $N$ is also a simple module and has k-basis $\{ e(4),\pi e(4) \}$. Then let $R(N):=\textrm{Res}(N) \in \textbf{R}_{\beta^+}$, which has basis $\{ e(4) \}$.
\\
\\
Now we calculate the convolution product of the KLR algebra modules;
\begin{equation*}
R(M) \circ R(N)=\textbf{R}_{\alpha^++\beta^+}e(\alpha^+\beta^+) \underset{\textbf{R}_{\alpha^+}\otimes \textbf{R}_{\beta^+}}{\otimes} (R(M) \otimes R(N)).
\end{equation*}
It has a basis $\{ e(e(0)\otimes e(4)), \sigma_1(e(0)\otimes e(4)) \}$.
\\
\\
Then, by after applying the induction functor, the convolution product of the VV algebra modules $M \circ N$ has basis
\begin{equation*}
\left\lbrace
\begin{tabular}{llll}
$e(e(0)\otimes e(4))$, & $\pi (e(0)\otimes e(4))$, &
$\sigma_1\pi\sigma_1(e(0)\otimes e(4))$, & $\pi\sigma_1\pi\sigma_1(e(0)\otimes e(4))$, \\
$\sigma_1(e(0)\otimes e(4))$, & $\sigma_1\pi(e(0)\otimes e(4))$, &
 $\pi\sigma_1(e(0)\otimes e(4))$, & $\pi\sigma_1\pi(e(0)\otimes e(4))$
\end{tabular}
\right\rbrace.
\end{equation*}
Similarly, we find that $N\circ M$ has basis
\begin{equation*}
\left\lbrace
\begin{tabular}{llll}
$e(e(4)\otimes e(0))$, & $\pi (e(4)\otimes e(0))$, &
$\sigma_1\pi\sigma_1(e(4)\otimes e(0))$, & $\pi\sigma_1\pi\sigma_1(e(4)\otimes e(0))$, \\
$\sigma_1(e(4)\otimes e(0))$, & $\sigma_1\pi(e(4)\otimes e(0))$, &
$\pi\sigma_1(e(4)\otimes e(0))$, & $\pi\sigma_1\pi(e(4)\otimes e(0))$
\end{tabular}
\right\rbrace.
\end{equation*}
What is the corresponding $R$-matrix $R_{M,N}:M\circ N \longrightarrow N\circ M$ with respect to this construction? Again, $\varphi_{w[1,1]}=\varphi_1=\sigma_1$. After ordering the basis in a suitable way we find
\begin{equation*} R_{M,N}=
\setcounter{MaxMatrixCols}{20}
\begin{pmatrix} 
0&1&0&0&0&0&0&0 \\ 
1&0&0&0&0&0&0&0 \\ 
0&0&0&0&0&1&0&0 \\ 
0&0&0&0&1&0&0&0 \\ 
0&0&0&1&0&0&0&0 \\ 
0&0&1&0&0&0&0&0 \\ 
0&0&0&0&0&0&0&1 \\ 
0&0&0&0&0&0&1&0
\end{pmatrix}.
\end{equation*}
We find that this coincides with our so-called naive construction, see Example \ref{example:vv algebra r-matrix 1}.
\end{example}
Let us now compute examples in which there exist arrows between $\supp(\beta)$ and $\supp(\gamma)$ in $\Gamma$.
\begin{example} \label{example:arrows between supports}
Let $\beta=\lambda+\lambda^{-1}+p^2\lambda+p^{-2}\lambda^{-1}$ and let $\gamma=\lambda+\lambda^{-1}$. Let 
\begin{equation*}
M=\w_\beta e(02)/ \textrm{rad}(\w_\beta e(02)) \in \w_\beta\textrm{-mod}
\end{equation*}
which is 4 dimensional with basis $\{ e(02),\pi e(02), \sigma_1\pi e(02),\pi\sigma_1\pi e(02) \}$. Then $R(M)$ is 1 dimensional with basis $\{ e(02) \}$.
Let 
\begin{equation*}
N=\w_\gamma e(0)/ \textrm{rad}(\w_\gamma e(0)) \in \w_\gamma\textrm{-mod}. 
\end{equation*}
Then $N$ has basis $\{ e(0),\pi e(0) \}$ and $R(N)$ has basis $\{ e(0) \}$. Now we can easily calculate a basis of $M\circ N$, $N\circ M$ using Lemma \ref{lemma:the form of the conv prod}. Let us order the elements of $\mathcal{D}(W^B_3/(\mathfrak{S}_2\times \mathfrak{S}_1))$ by length. Then $M\circ N$ has basis
\begin{equation*}
\left\lbrace
\begin{array}{lll}
e(e(02)\otimes e(0)) & \sigma_1\sigma_2 (e(02)\otimes e(0)) &  \\
\pi (e(02)\otimes e(0)) & \pi\sigma_2 (e(02)\otimes e(0)) & \ldots\ldots \\
\sigma_2 (e(02)\otimes e(0)) & \pi\sigma_1\pi (e(02)\otimes e(0)) &   \\
\sigma_1\pi (e(02)\otimes e(0)) & \sigma_2\sigma_1\pi (e(02)\otimes e(0)) & 
\end{array}
\right\rbrace
\end{equation*}
and is 24 dimensional in total. We calculate the image of $M\circ N$ under $R_{M,N}$. Note that $\varphi_{w[n,m]}=\varphi_2\varphi_1$.
\begin{equation*}
\begin{split}
R_{M,N}(e(e(02)\otimes e(0)))&=\varphi_2\varphi_1(e(0)\otimes e(02)) \\
&=\sigma_2(\sigma_1x_1-x_1\sigma_1)(e(0)\otimes e(02))\\
&=-\sigma_2x_1\sigma_1(e(0)\otimes e(02))\\
&=-\sigma_2(\sigma_1x_2-e)(e(0)\otimes e(02))\\
&=\sigma_2(e(0)\otimes e(02))\\
&=0.
\end{split}
\end{equation*}
But then, for any other basis element $\sigma_\eta(e(02)\otimes e(0))$ of $M\circ N$, we have
\begin{equation*}
R_{M,N}(\sigma_\eta(e(02)\otimes e(0)))=\sigma_\eta\varphi_{w[n,m]}(e(0)\otimes e(02))=0.
\end{equation*}
Then $R_{M,N}=0$ here. We resolve this problem by introducing spectral parameters and the renormalised $R$-matrix $r_{M,N}$.
\end{example}
\subsection{Spectral Parameters}
For an indeterminate $z$, homogeneous of degree 2, we recall the algebra morphism $\psi_z$ from \cite{KKKI}. Take $\alpha \in \mathbb{N}I$. Then $\psi_z$ is the morphism
\begin{equation*}
\begin{split}
\psi_z:\klra &\longrightarrow k[z] \otimes \klra \\
e(i) &\mapsto 1\otimes e(i) \\
\sigma_k &\mapsto 1\otimes \sigma_k \\
x_l &\mapsto z\otimes 1+1\otimes x_l.
\end{split}
\end{equation*}
Given $L \in \klra\textrm{-Mod}$, we define $L_z:=k[z]\otimes L \in (k[z] \otimes \klra)\textrm{-Mod}$, where the action of $1\otimes x$, for $x \in \klra$, is given by multiplication by $\psi_z(x)$. We may also view $L_z$ as a $\klra$-module. Let $z^\prime$ be another indeterminate of degree 2, with $z^\prime \neq z$, and take $L^\prime \in \klrg\textrm{-Mod}$, for some $\gamma \in \mathbb{N}I$. Then we can calculate $L_z \circ L^\prime_{z^\prime}$ as modules with this twisted action.
\\
\\
For $M \in \vvb\textrm{-mod}$, we define 
\begin{equation*}
M_z:=(k[z]\otimes \vvb \textbf{e})\otimes_{k[z]\otimes \textbf{e}\vvb\textbf{e}} R(M)_z \in (k[z]\otimes \vvb) \textrm{-Mod}
\end{equation*}
where again $R(M):= \textbf{e}M$, $\textbf{e}=\sum_{\textbf{i} \in I^{\beta^+}} \idemp$ and $R(M)_z \in \big(k[z] \otimes \klrb\big) \textrm{-Mod}$. We also remark that $M_z$ may be considered as a $\vvb$-module.
\\
\\
Considering $M_z$ as a $\vvb$-module, for $M \in \vvb\textrm{-mod}$, note that we have $R(M_z)=R(M)_z$;
\begin{equation*}
R(M_z)=\big(k[z]\otimes \textbf{e}\vvb \textbf{e}\big) \otimes_{k[z] \otimes \textbf{e}\vvb \textbf{e}} R(M)_z \cong R(M)_z. 
\end{equation*}
Then,
\begin{equation}\label{equation:spectral parameters}
M_z\circ N_{z^\prime}=k[z,z^\prime]\otimes\w_{\beta+\gamma}e(\beta^+\gamma^+)\otimes_{k[z]\otimes \textbf{R}_{\beta^+}\otimes k[z^\prime]\otimes \textbf{R}_{\gamma^+}} \big(k[z] \otimes R(M) \otimes k[z^\prime] \otimes R(N)\big)
\end{equation}
The \textbf{renormalized $R$-matrix} $r_{M,N}:M\circ N \longrightarrow N\circ M$ is defined as
\begin{equation*}
r_{M,N}:=((z^\prime-z)^{-s}R_{M_z,N_{z^\prime}})|_{z,z^\prime =0}
\end{equation*}
where $s$ is the largest non-negative integer such that $(z^\prime -z)^s$ is a factor of the image of $R_{M_z,N_{z^\prime}}$. We can make this morphism homogeneous by shifting the grading by $-(\beta,\gamma)+2[\beta,\gamma]+2s$.
\\
\\
Example \ref{example:arrows between supports} continued: Let us continue with this example using spectral parameters. Consider the basis element $e(1\otimes e(02) \otimes 1 \otimes e(0)) \in M_z\circ N_{z^\prime}$.
\begin{equation*}
R_{M_z,N_{z^\prime}}(e(1\otimes e(02) \otimes 1 \otimes e(0))) \in N_{z^\prime}\circ M_z=(z^\prime-z)\sigma_2\sigma_1(1\otimes e(0)\otimes 1 \otimes e(02))
\end{equation*}
which is now non-zero. One can do similar calculations for other basis elements.
\begin{example}\label{example:VV example II}
Let us now consider a smaller example; $\beta=\lambda+\lambda^{-1}$ and $\gamma=p^2\lambda+p^{-2}\lambda^{-1}$. Let us again take 
\begin{equation*}
M=\w_\beta e(0)/ \textrm{rad}(\w_\beta e(0)) \in \w_\beta\textrm{-mod}
\end{equation*}
which is 2 dimensional with basis $\{ e(0),\pi e(0)\}$. Then $R(M)$ is 1 dimensional with basis $\{ e(0) \}$.
Let 
\begin{equation*}
N=\w_\gamma e(2)/ \textrm{rad}(\w_\gamma e(2)) \in \w_\gamma\textrm{-mod} 
\end{equation*}
which is 2 dimensional with basis $\{ e(2),\pi e(2)\}$. Then $R(N))$ is 1 dimensional with basis $\{ e(2) \}$. Then
\begin{equation*}
\begin{split}
M\circ N&= \big\langle \sigma_w\big( e(0)\otimes e(2) \big) \mid w \in \mathcal{D}  \big\rangle \\
N\circ M&= \big\langle \sigma_w\big( e(2)\otimes e(0) \big) \mid w \in \mathcal{D}  \big\rangle 
\end{split}
\end{equation*}
where $\mathcal{D}:=\mathcal{D}(W^B_2/(\mathfrak{S}_1\times \mathfrak{S}_1))=W^B_2$. One finds that $R_{M,N}$ is given by
\begin{equation*} R_{M,N}=
\setcounter{MaxMatrixCols}{20}
\begin{pmatrix} 
0&0&0&0&0&0&0&0 \\ 
0&0&0&0&0&0&0&0 \\ 
1&0&0&0&0&0&0&0 \\ 
0&0&0&0&0&0&0&0 \\ 
0&1&0&0&0&0&0&0 \\ 
0&0&0&1&0&0&0&0 \\ 
0&0&0&0&0&0&0&0 \\ 
0&0&0&0&0&0&1&0
\end{pmatrix}
\end{equation*}
which is clearly not invertible. One can also check that $R_{N,M}=R_{M,N}$.
\\
\\
We do the same but now with spectral parameters, using \ref{equation:spectral parameters}. One finds that the image of basis elements which were mapped to something non-zero by $R_{M,N}$ remain largely unchanged after introducing spectral parameters. So let us focus on the basis elements that are mapped to zero by $R_{M,N}$. We have,
\begin{equation*}
\begin{split}
R_{M_z,N_{z^\prime}}(\sigma_1(1\otimes e(0)\otimes 1 \otimes e(2)))&=(z^\prime-z)(1\otimes e(2)\otimes 1 \otimes e(0))\\
R_{M_z,N_{z^\prime}}(\pi\sigma_1(1\otimes e(0)\otimes 1 \otimes e(2)))&=(z^\prime-z)\pi(1\otimes e(2)\otimes 1 \otimes e(0))\\
R_{M_z,N_{z^\prime}}(\sigma_1\pi\sigma_1(1\otimes e(0)\otimes 1 \otimes e(2)))&=(z^\prime-z)\sigma_1\pi(1\otimes e(2)\otimes 1 \otimes e(0))\\
R_{M_z,N_{z^\prime}}(\pi\sigma_1\pi\sigma_1(1\otimes e(0)\otimes 1 \otimes e(2)))&=(z^\prime-z)\pi\sigma_1\pi(1\otimes e(2)\otimes 1 \otimes e(0)).
\end{split}
\end{equation*}
In other words, $R_{M_z,N_{z^\prime}}$ is invertible. Recall the renormalised $R$-matrix $r_{M,N}:=((z^\prime -z)^{-s}R_{M_z,N_{z^\prime}})|_{z,z^\prime=0}$, where $s$ is the largest non-negative integer such that $(z^\prime - z)^s$ is a factor of $R_{M_z,N_{z^\prime}}(M_z\circ N_{z^\prime})$. Since $s=0$ here, we find that $r_{M,N}=R_{M,N}$ in this example. Similarly, $r_{N,M}=R_{N,M}$.
\\
\\
Consider the short exact sequence
\begin{equation*}
0 \longrightarrow \textrm{ker}(r_{M,N}) \longrightarrow M\circ N \longrightarrow \textrm{im}(r_{M,N}) \longrightarrow 0.
\end{equation*}
One finds that, as a k-vector space, 
\begin{equation*}
\textrm{ker}(r_{M,N})=\big\langle \sigma_1(e(0)\otimes e(2)),\pi\sigma_1(e(0)\otimes e(2)), \sigma_1\pi\sigma_1(e(0)\otimes e(2)),\pi\sigma_1\pi\sigma_1(e(0)\otimes e(2)) \big\rangle 
\end{equation*}
and is a simple $\w_{\beta+\gamma}$-module. We also have
\begin{equation*}
\textrm{im}(r_{M,N})=\big\langle \sigma_1(e(2)\otimes e(0)),\pi\sigma_1(e(2)\otimes e(0)), \sigma_1\pi\sigma_1(e(2)\otimes e(0)),\pi\sigma_1\pi\sigma_1(e(2)\otimes e(0)) \big\rangle 
\end{equation*}
which is also a simple $\w_{\gamma+\beta}$-module. Then, in the Grothendieck group, one has $[M\circ N]=[\textrm{ker}(r_{M,N})]+[\textrm{im}(r_{M,N})]$. In the Grothendieck ring this is $[M][N]=[\textrm{ker}(r_{M,N})]+[\textrm{im}(r_{M,N})]$.
\\
\\
Let us look at the structure of $M\circ N$ (the structure of $N\circ M$ will be similar). It has two simple subquotients, $L(1)$ and $L(2)$;
\begin{center}
\begin{tikzpicture}
\node(A) at (-0.5,0) {$M\circ N:$};
\node[anchor=west] (B) at (1,1) {$L(1) =\langle e(0\otimes 2),\pi(0\otimes 2),\sigma_1\pi(0\otimes 2),\pi\sigma_1\pi(0\otimes 2) \rangle$};
\node[anchor=west] (C) at (1,-1) {$L(2) =\langle \sigma_1e(0\otimes 2),\pi\sigma_1(0\otimes 2),\sigma_1\pi\sigma_1(0\otimes 2),\pi\sigma_1\pi\sigma_1(0\otimes 2) \rangle.$};
\draw (1.4,0.70) -- (1.4,-0.70);
\end{tikzpicture}
\end{center}
%\begin{center}
%\includegraphics[scale=0.9]{structure_example.png}
%\end{center}
So we find here that $M\circ N$, $N\circ M$ both have a simple socle and a simple head and $\textrm{ker}(r_{M,N})=\textrm{soc}(M\circ N)$, $\textrm{im}(r_{M,N})=\textrm{soc}(N\circ M)$. Then we have $\textrm{soc}(N\circ M)\cong\textrm{hd}(M\circ N)$ because
\begin{equation*}
\textrm{im}(r_{M,N})=\textrm{soc}(N\circ M)\cong (M\circ N)/\textrm{ker}(r_{M,N})\cong \textrm{hd}(M\circ N).
\end{equation*}
%Note that $L(1) \not\cong L(2)$ for the following reason. It may be tempting to define a map $g:L(1) \longrightarrow L(2)$ by sending each basis element in $L(1)$ to the one directly beneath it in the above picture. But then one gets
%\begin{equation*}
%\begin{split}
%\sigma_1(e(e(0)\otimes e(2)))&=g(e(e(0)\otimes e(2)))\\
%&=e(02)g(e(e(0)\otimes e(2)))\\
%&=e(02)\sigma_1(e(0)\otimes e(2))=0.
%\end{split}
%\end{equation*}
We now calculate $R_{M,M}$ where $M$ is the same module as above. We have $M\circ M = \big\langle \sigma_w(e(0)\otimes e(0)) \mid w \in W^B_2 \big\rangle$. One can easily check that $M\circ M$ is a simple module. Hence $M$ is real and we can check that $N$ is also real.
\\
\\
Since
\begin{equation*}
\begin{split}
e(e(0)\otimes e(0)) \mapsto \varphi_1(e(0)\otimes e(0)) &= (\sigma_1x_1-x_1\sigma_1)(e(0)\otimes e(0)) \\
&=e(e(0)\otimes e(0))
\end{split}
\end{equation*}
we find that $R_{M,M}=r_{M,M}=\textrm{Id}_{M\circ M}$. In fact the following results show that the properties in this example can be generalised.
\end{example}
The following two results are analogues of Lemma 3.1 and Theorem 3.2 from \cite{SimplicityOfHeadsSocles}.
\begin{lemma}\label{lemma:submodule lemma}
Suppose $\beta_k \in \thetani$ and $M_k \in \w_{\beta_k}\textrm{-mod}$, where $k=1,2,3$. Suppose further that
\begin{itemize}
\item[$\bigcdot$] $X\subset M_1\circ M_2$ is a $\w_{\beta_1+\beta_2}$-submodule.
\item[$\bigcdot$] $Y\subset M_2\circ M_3$ is a $\w_{\beta_2+\beta_3}$-submodule.
\end{itemize}
such that $X\circ M_3 \subset M_1\circ Y$ as submodules of $M_1\circ M_2\circ M_3$. Then there exists an $\w_{\beta_2}$-submodule $N\subset M_2$ with $X\subset M_1\circ N$ and $N\circ M_3\subset Y$.
\end{lemma}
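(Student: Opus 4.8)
The plan is to reduce the statement to the corresponding result for KLR algebras, \cite[Lemma~3.1]{SimplicityOfHeadsSocles}, by transporting the hypotheses across the Morita equivalence $\Res$ between $\w_{\alpha}\textrm{-mod}$ and $\textbf{R}_{\alpha^+}\textrm{-mod}$ of \cite[Theorem~2.10]{Walker}, and then pulling the conclusion back with the quasi-inverse functor $\Ind$.

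First I would record the formal input needed. The functor $\Res=\textbf{e}\cdot(-)$ is an exact equivalence, so it preserves and reflects submodule inclusions, its quasi-inverse is $\Ind$, and by the compatibility lemma established above it satisfies $\Res(A\circ B)\cong\Res(A)\circ\Res(B)$ naturally. Moreover on the KLR side the functors $(-)\circ P$ and $P\circ(-)$ are exact for every module $P$, since $\textbf{R}_{\alpha+\gamma}e(\alpha,\gamma)$ is free as a right $\textbf{R}_{\alpha}\otimes\textbf{R}_{\gamma}$-module, and hence the analogous statement holds for the VV convolution product; in particular the inclusions $X\circ M_3\subseteq(M_1\circ M_2)\circ M_3$ and $M_1\circ Y\subseteq M_1\circ(M_2\circ M_3)$ in the hypothesis are meaningful. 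Applying $\Res$ to the datum $X\circ M_3\subseteq M_1\circ Y$ inside $M_1\circ M_2\circ M_3$ then produces $\textbf{R}_{\beta_k^+}$-modules $\Res(M_k)$, submodules $\overline X:=\Res(X)\subseteq\Res(M_1)\circ\Res(M_2)$ and $\overline Y:=\Res(Y)\subseteq\Res(M_2)\circ\Res(M_3)$, and an inclusion $\overline X\circ\Res(M_3)\subseteq\Res(M_1)\circ\overline Y$ inside $\Res(M_1)\circ\Res(M_2)\circ\Res(M_3)$.

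Next I would invoke \cite[Lemma~3.1]{SimplicityOfHeadsSocles} for the KLR algebras $\textbf{R}_{\beta_1^+},\textbf{R}_{\beta_2^+},\textbf{R}_{\beta_3^+}$ — legitimate because these are ordinary KLR algebras of underlying type $A$, to which the structure theory of convolution products used in \emph{loc.\ cit.}\ applies verbatim — obtaining an $\textbf{R}_{\beta_2^+}$-submodule $\overline N\subseteq\Res(M_2)$ with $\overline X\subseteq\Res(M_1)\circ\overline N$ and $\overline N\circ\Res(M_3)\subseteq\overline Y$. Setting $N:=\Ind(\overline N)$, which is a $\w_{\beta_2}$-submodule of $\Ind\Res(M_2)\cong M_2$, and applying $\Res$ once more together with the compatibility with $\circ$, one finds $\Res(M_1\circ N)\cong\Res(M_1)\circ\overline N\supseteq\overline X=\Res(X)$ and $\Res(N\circ M_3)\cong\overline N\circ\Res(M_3)\subseteq\overline Y=\Res(Y)$; since $\Res$ reflects inclusions, $X\subseteq M_1\circ N$ and $N\circ M_3\subseteq Y$, as required.

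The step I expect to need the most care is the bookkeeping in the second paragraph: one must verify that the natural isomorphisms $\Res(A\circ B)\cong\Res(A)\circ\Res(B)$ are compatible with the relevant canonical maps — the submodule inclusions $X\hookrightarrow M_1\circ M_2$, $Y\hookrightarrow M_2\circ M_3$ and the associativity isomorphism $(M_1\circ M_2)\circ M_3\cong M_1\circ(M_2\circ M_3)$ — so that the single inclusion $X\circ M_3\subseteq M_1\circ Y$ genuinely transports to the KLR side; this is a routine diagram chase using the explicit form of the convolution product in Lemma~\ref{lemma:the form of the conv prod}. As an alternative one could imitate the proof of \cite[Lemma~3.1]{SimplicityOfHeadsSocles} directly inside $\w_{\beta_1+\beta_2+\beta_3}$, taking $N=\{\,v\in M_2 : v\otimes\Res(M_3)\subseteq Y_0\,\}$ where $Y_0$ is the bottom ($w=e$) layer of $Y$ in the decomposition of Lemma~\ref{lemma:the form of the conv prod}; but this would require reproving Mackey-type compatibilities for VV algebras, so the Morita reduction is the cleaner route.
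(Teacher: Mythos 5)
Your argument is correct, but it is not the route the paper takes. The paper proves the lemma directly inside the VV algebra: it imitates the proof of the corresponding KLR statement word for word, using the decomposition of Lemma \ref{lemma:the form of the conv prod} and observing that the only new ingredient needed is the combinatorial inclusion $\mathcal{D}(W^B_{n_1+n_2}/(\mathfrak{S}_{n_1}\times\mathfrak{S}_{n_2}))\subset\mathcal{D}(W^B_{n_1+n_2+n_3}/(\mathfrak{S}_{n_1}\times\mathfrak{S}_{n_2+n_3}))$, where $2n_k=\mathrm{ht}(\beta_k)$ — i.e.\ exactly the ``alternative'' you sketch in your last sentence, except that the paper regards the Mackey-type bookkeeping as already available rather than as something to be reproved. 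Your main route instead transports the whole statement across the Morita equivalence $\Res\dashv\Ind$, using that the VV convolution is by construction $\Ind(\Res(-)\circ\Res(-))$ and the compatibility $\Res(M\circ N)\cong\Res(M)\circ\Res(N)$ recorded earlier in the paper, and then quotes the KLR lemma; this is sound, since an exact equivalence identifies submodule lattices and the KLR lemma holds for the type $A$ KLR algebras arising here. What each approach buys: your reduction is arguably cleaner and makes transparent that the result is purely a shadow of the KLR statement in the regime $\pm p,\pm q\notin I$, at the cost of the naturality/associativity bookkeeping you rightly flag (compatibility of the isomorphisms $\Res(A\circ B)\cong\Res(A)\circ\Res(B)$ with the inclusions and with associativity); the paper's direct adaptation stays inside $\w_{\beta_1+\beta_2+\beta_3}$, isolates the one genuinely new combinatorial fact about type $B$ coset representatives, and would survive in settings where the Morita equivalence is unavailable (e.g.\ $p$ or $q\in I$), which is relevant to the questions raised in Section \ref{section:conjectures}.
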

\begin{proof}
The proof is similar to the proof of the corresponding result in \cite{KKKO}. We need only note that $\mathcal{D}(W^B_{n_1+n_2}/\mathfrak{S}_{n_1}\times \mathfrak{S}_{n_2}) \subset \mathcal{D}(W^B_{n_1+n_2+n_3}/\mathfrak{S}_{n_1}\times \mathfrak{S}_{n_2+n_3})$, where $2n_k=\textrm{ht}(\beta_k)$.
\end{proof}
We now have the following theorem for VV algebras which is a replica of \cite{SimplicityOfHeadsSocles}, Theorem 3.2.
\begin{theorem}\label{theorem:analogue of KKKO}
Suppose $\beta,\gamma\in \thetani$. Take a non-zero module $M\in \w_\beta\textrm{-mod}$ such that $r_{M,M}\in k\textrm{id}_{M\circ M}$ and a simple module $N\in \w_\gamma\textrm{-mod}$.  Then,
\begin{itemize}
\item[(i)] $M\circ N$ and $N\circ M$ both have simple socles and simple heads. 
\item[(ii)] $\textrm{im}(r_{M,N})=\textrm{soc}(N\circ M)=\textrm{hd}(M\circ N)$ and \\ $\textrm{im}(r_{N,M})=\textrm{soc}(M\circ N)=\textrm{hd}(N\circ M)$.
\item[(iii)] $M$ is a simple module.
\end{itemize}
\end{theorem}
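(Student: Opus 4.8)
The proof is the VV-algebra analogue of the proof of \cite[Theorem 3.2]{SimplicityOfHeadsSocles}, with Lemma \ref{lemma:submodule lemma} playing the role of \cite[Lemma 3.1]{SimplicityOfHeadsSocles} and with minimal length coset representatives for the Weyl groups of type $B$ replacing those for the symmetric groups throughout. First I would record the formal properties of the renormalized $R$-matrix that the argument needs. It is always nonzero, since in its definition $s$ is taken maximal. Exactly as for the intertwiners of KLR algebras, Lemma \ref{lemma:varphi satisfies the braid rels} and the shape of $w[m,n]$ (see (\ref{equation:the element w[m,n]})) give, up to multiplication by a nonzero scalar, the multiplicativity relations expressing $r_{L_1 \circ L_2,\, P}$ in terms of $(r_{L_1,P}\circ\textrm{id})\circ(\textrm{id}\circ r_{L_2,P})$ and $r_{P,\, L_1 \circ L_2}$ in terms of $(\textrm{id}\circ r_{P,L_2})\circ(r_{P,L_1}\circ\textrm{id})$, just as in \cite{KKKI}; the standing assumption $\pm p, \pm q\notin I$, which forces $\textrm{deg}(\pi\idemp)=0$, ensures that the presence of $\pi$ does not disturb these computations. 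From the definition $M\circ N=\w_{\beta+\gamma}e(\beta^+\gamma^+)\otimes_{\textbf{R}_{\beta^+}\otimes\textbf{R}_{\gamma^+}}(\Res(M)\otimes\Res(N))$, the freeness of $\w_{\beta+\gamma}e(\beta^+\gamma^+)$ over $\textbf{R}_{\beta^+}\otimes\textbf{R}_{\gamma^+}$, and the exactness and faithfulness of $\Res$, each of $-\circ N$ and $M\circ-$ is an exact functor and $\Res(M')\subsetneq\Res(M)$ whenever $M'\subsetneq M$. Finally, the $\psi_z$-deformed $R$-matrix $R_{M_z,N_{z'}}$ is natural in each variable, so $r_{M,N}$ is natural in the weak sense that, for a module homomorphism $f$, it carries the relevant subobject into the image of $\textrm{id}\circ f$ (resp.\ $f\circ\textrm{id}$).

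I would prove (iii) first, directly. Since $r_{M,M}$ is a nonzero scalar multiple of $\textrm{id}_{M\circ M}$, it is an isomorphism. If $M$ were not simple, choose $0\neq M'\subsetneq M$. Naturality of $R_{M_z,M_{z'}}$ applied to the inclusion $M'\hookrightarrow M$ and the surjection $M\twoheadrightarrow M/M'$ shows that $R_{M_z,M_{z'}}$ maps $M'_z\circ M_{z'}$ into $M_z\circ M'_{z'}$ and $M_z\circ M'_{z'}$ into $M'_z\circ M_{z'}$; specialising at $z=z'=0$, the same holds for $r_{M,M}$. Since $r_{M,M}$ is a nonzero scalar and $\dim(M'\circ M)=\dim(M\circ M')$ by Lemma \ref{lemma:the form of the conv prod} (both being $\w_\beta$-modules, hence with the same underlying $\beta$), this forces $M'\circ M=M\circ M'$ as subspaces of $M\circ M$. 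Comparing the $w=1$ summand in the decomposition of Lemma \ref{lemma:the form of the conv prod} (these summands lie in distinct $e(\textbf{i})$-weight spaces, so equality of the total subspaces forces equality of each summand) then gives $\Res(M')\otimes\Res(M)=\Res(M)\otimes\Res(M')$ inside $\Res(M)\otimes\Res(M)$, which is impossible when $\Res(M')\subsetneq\Res(M)$. Hence $M$ is simple; together with $r_{M,M}\in k\,\textrm{id}$ this says precisely that $M$ is a real simple module.

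With $M$ real simple and $N$ simple we are exactly in the setting of \cite[Theorem 3.2]{SimplicityOfHeadsSocles}, and I would run that argument in type $B$. Concretely: given a nonzero submodule $Z\subseteq M\circ N$, one uses the exactness of $\circ$, the multiplicativity of $r$, and the fact that $r_{M,M}$ is scalar to manufacture, inside $M\circ N\circ M$, an inclusion of the shape $X\circ M\subseteq M\circ Y$ to which Lemma \ref{lemma:submodule lemma} applies with $M_1=M_3=M$ and $M_2=N$; simplicity of $N$ (so that the submodule $N'\subseteq N$ produced is $0$ or $N$) then forces $\textrm{im}(r_{N,M}\colon N\circ M\to M\circ N)\subseteq Z$. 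As this image is a fixed nonzero submodule contained in every nonzero submodule of $M\circ N$, it is the unique minimal nonzero submodule, hence simple and equal to $\textrm{soc}(M\circ N)$; the statement for $N\circ M$ is symmetric and gives $\textrm{soc}(N\circ M)=\textrm{im}(r_{M,N})$. Applying this to the duals, via the duality functor on $\w\textrm{-gmod}$ (under which $M$ remains real simple), yields the simple head statements, and since $\textrm{im}(r_{M,N})\cong(M\circ N)/\textrm{ker}(r_{M,N})$ is a quotient of $M\circ N$ one identifies $\textrm{hd}(M\circ N)=\textrm{im}(r_{M,N})$, which is (ii); part (i) then follows.

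The genuinely VV-specific work sits in the first two paragraphs and in the combinatorics feeding Lemma \ref{lemma:submodule lemma}: one must confirm that the renormalized $R$-matrix behaves formally exactly like its KLR counterpart (nonzero-ness is free, but multiplicativity and the weak naturality go back to the faithful polynomial representation of \cite{VaragnoloVasserot} used in Lemma \ref{lemma:varphi satisfies the braid rels}), and that the coset-representative inclusion recorded in the proof of Lemma \ref{lemma:submodule lemma} makes the bookkeeping of \cite{SimplicityOfHeadsSocles} go through unchanged. I expect this last point --- ensuring the type-$B$ combinatorics slot correctly into the submodule lemma so that the inclusion $X\circ M\subseteq M\circ Y$ can actually be produced --- to be the only place needing real care; the remainder is a transcription of \cite{SimplicityOfHeadsSocles}.
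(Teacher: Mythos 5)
Your treatment of the heart of the theorem is the same as the paper's: for a nonzero submodule $S\subseteq M\circ N$ one forms the commutative diagram with spectral parameters in which $z^{-m-m'}R_{S,M_z}$ factors through $(z^{-m'}R_{M,M_z}\circ N)\cdot(M\circ z^{-m}R_{N,M_z})$, specialises at $z=0$ using $r_{M,M}\in k\,\textrm{id}$ to get $S\circ M\subseteq M\circ r_{N,M}^{-1}(S)$, and then applies Lemma \ref{lemma:submodule lemma} with $M_1=M_3=M$, $M_2=N$ and simplicity of $N$ to conclude $\textrm{im}(r_{N,M})\subseteq S$; your sketch of this step, including the role of naturality and multiplicativity of the renormalized $R$-matrix, matches the paper. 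Where you genuinely diverge is part (iii): the paper gets it for free by taking $N=k\in\w_0\textrm{-mod}$, so that $M\circ k\cong M\cong k\circ M$ and the socle statement already proved forces $M$ to be simple, whereas you prove (iii) first by a direct naturality argument inside $M\circ M$. Your route works, but one justification needs repair: the claim that the summands of Lemma \ref{lemma:the form of the conv prod} lie in distinct $e(\textbf{i})$-weight spaces is false when $\beta=\gamma$ (the same idempotent weights recur for different coset representatives $w$); the correct way to finish is to project onto the $w=1$ summand of the direct-sum decomposition, which sends $M'\circ M$ onto $\Res(M')\otimes\Res(M)$ and $M\circ M'$ onto $\Res(M)\otimes\Res(M')$, and these differ when $\Res(M')\subsetneq\Res(M)$. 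The paper's shortcut avoids this entirely and is the cleaner option. Finally, for the head statements you appeal to a duality functor on $\w\textrm{-gmod}$ satisfying $(M\circ N)^\star\cong N^\star\circ M^\star$; this compatibility is not set up anywhere in the paper, so either it must be established or one should instead run the symmetric half of the argument of \cite{SimplicityOfHeadsSocles} (which is what the paper implicitly defers to), after which your identification $\textrm{hd}(M\circ N)=\textrm{im}(r_{M,N})$ via $(M\circ N)/\ker(r_{M,N})$ is fine.
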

\begin{proof}
For the convenience of the reader we recall \cite{SimplicityOfHeadsSocles}, Theorem 3.2. For half of $(i)$ and $(ii)$ we will prove that $M\circ N$ has a simple socle and that this unique simple submodule is $\textrm{im}(r_{N,M})$. Let $S \subset M\circ N$ be any non-zero $\w_{\beta+\gamma}$-submodule. Let $m,m^\prime$ be the order of zeroes of $R_{N,M_z}, R_{M,M_z}$ respectively. By definition this means,
\begin{equation*}
\begin{split}
r_{N,M}=(z^{-m}R_{N,M_z})|_{z=0}:N\circ M&\longrightarrow M\circ N \\
r_{M,M}=(z^{-m^\prime}R_{M,M_z})|_{z=0}:M\circ M&\longrightarrow M\circ M.
\end{split}
\end{equation*}
We have the following commutative diagram
\begin{center}
\begin{tikzpicture}
\matrix (m) [matrix of math nodes, row sep=4em,
column sep=7em, text height=1.5ex, text depth=0.25ex]
{S\circ M_z &  & M_z\circ S\\
M\circ N\circ M_z & M\circ M_z \circ N & M_z\circ M\circ N. \\};
\path[->]
(m-1-1) edge node [above] {$z^{-m-m^\prime} R_{S,M_z}$} (m-1-3)
(m-2-1) edge node [above] {$M\circ z^{-m}R_{N,M_z}$} (m-2-2)
(m-2-2) edge node [above] {$z^{-m^\prime}R_{M,M_z}\circ N$} (m-2-3);
\path[>=stealth,right hook->]
(m-1-3) edge (m-2-3)
(m-1-1) edge (m-2-1);
\end{tikzpicture}
\end{center}
When we set $z=0$ we obtain
\begin{center}
\begin{tikzpicture}
\matrix (m) [matrix of math nodes, row sep=4em,
column sep=7em, text height=1.5ex, text depth=0.25ex]
{S\circ M &  & M\circ S\\
M\circ N\circ M & M\circ M \circ N & M\circ M\circ N \\};
\path[->]
(m-1-1) edge (m-1-3)
(m-2-1) edge node [above] {$M\circ r_{N,M}$} (m-2-2)
(m-2-2) edge node [above] {$k\textrm{id}_{M\circ M\circ N}$} (m-2-3);
\path[>=stealth,right hook->]
(m-1-3) edge (m-2-3)
(m-1-1) edge (m-2-1);
\end{tikzpicture}
\end{center}
because of the assumption $r_{M,M}=k\textrm{id}_{M\circ M}$. Then we have $(M\circ r_{N,M})(S\circ M) \subset M\circ S$, i.e.\ $S\circ M \subset M\circ r^{-1}_{N,M}(S)$. From Lemma \ref{lemma:submodule lemma} we know that there exists $K\subset N$ such that $S\subset M\circ K$ and $K\circ M \subset r^{-1}_{N,M}(S)$. It follows that $K\neq 0$ and, since $N$ is simple, $N=K$. So $N\circ M \subset r_{N,M}^{-1}(S)$ and $\textrm{im}(r_{N,M})\subset S$. But $S$ was any non-zero submodule of $M\circ N$ and so $\textrm{im}(r_{M,N})$ is the unique non-zero simple submodule of $M\circ N$. 
\\
\\
For $(iii)$ we take $N=k \in \w_0\textrm{-mod}$, where $k$ is the ground field. Then $M\circ k \cong M \cong k\circ M$ and $(iii)$ follows from $(i)$ and $(ii)$.
\end{proof}
\begin{corollary} \label{corollary:TFAE real simple, identity map}
Take $0 \neq M \in \vvb\textrm{-mod}$. Then the following are equivalent;
\begin{itemize}
\item[(i)] $M$ is a real simple $\vvb$-module.
\item[(ii)] $r_{M,M} \in k\hspace{0.2em}\textrm{id}$.
\item[(iii)] $\End_{\w_{2\beta}}(M\circ M)=k\hspace{0.2em}\textrm{id}_{M\circ M}$.
\end{itemize}
\end{corollary}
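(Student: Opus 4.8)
The plan is to deduce all three equivalences from Theorem \ref{theorem:analogue of KKKO}, following the pattern of the corresponding corollary in \cite{SimplicityOfHeadsSocles}. The running observation is that the renormalized $R$-matrix $r_{M,M}\colon M\circ M\to M\circ M$ is, by its very construction, a non-zero homogeneous endomorphism of $M\circ M$ (of degree $0$ in the present conventions, using that simple modules are self-dual up to a grading shift), so it always defines a non-zero element of $\End_{\w_{2\beta}}(M\circ M)$; in particular $r_{M,M}\in k\,\textrm{id}_{M\circ M}$ if and only if $r_{M,M}$ is an isomorphism. Granting this, the implication $(iii)\Rightarrow(ii)$ is immediate, since $r_{M,M}$ is a non-zero element of $\End_{\w_{2\beta}}(M\circ M)=k\,\textrm{id}_{M\circ M}$.

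For $(ii)\Rightarrow(i)$ I would apply Theorem \ref{theorem:analogue of KKKO} twice. First, taking the simple module $N=k\in\w_0\textrm{-mod}$ (so that $M\circ k\cong M\cong k\circ M$), part $(iii)$ of the theorem already gives that $M$ itself is simple. Now knowing both that $M$ is simple and that $r_{M,M}\in k\,\textrm{id}$, one may invoke the theorem again with $N=M$: parts $(i)$ and $(ii)$ yield that $M\circ M$ has simple socle and that $\textrm{im}(r_{M,M})=\textrm{soc}(M\circ M)$. But $r_{M,M}$ is a non-zero scalar multiple of the identity, hence surjective, so $\textrm{im}(r_{M,M})=M\circ M$; therefore $M\circ M=\textrm{soc}(M\circ M)$ is simple and $M$ is real simple.

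For $(i)\Rightarrow(iii)$, if $M$ is real simple then $M\circ M$ is a simple object of $\w_{2\beta}\textrm{-gmod}$, so by the graded Schur lemma $\End_{\w_{2\beta}}(M\circ M)$ is a graded division algebra. The only genuine obstacle is to conclude that its degree-zero part is exactly $k\,\textrm{id}_{M\circ M}$, i.e.\ to exclude non-trivial finite-dimensional division $k$-algebras. I would handle this by transporting the statement through the graded Morita equivalence $\klrplus\textrm{-Mod}\sim\vv\textrm{-Mod}$ of \cite{Walker}: since $\Res$ is compatible with convolution products, $\Res(M\circ M)=\Res(M)\circ\Res(M)$ is a simple graded module over $\klrplus$, and simple graded KLR-modules are known to have degree-zero endomorphism ring equal to the ground field; the Morita equivalence then identifies $\End_{\w_{2\beta}}(M\circ M)\cong\End(\Res(M\circ M))$ as graded algebras, giving $\End_{\w_{2\beta}}(M\circ M)=k\,\textrm{id}_{M\circ M}$. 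This completes the cycle $(i)\Rightarrow(iii)\Rightarrow(ii)\Rightarrow(i)$. I expect the two applications of Theorem \ref{theorem:analogue of KKKO} to be entirely routine bookkeeping; essentially all of the subtlety is concentrated in the degree-zero endomorphism computation above and in checking that $r_{M,M}$ is genuinely non-zero and homogeneous of degree $0$.
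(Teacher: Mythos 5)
Your proof is correct and takes essentially the same route as the paper's, which disposes of the three implications in one line each — $(i)\Rightarrow(iii)$ from simplicity of $M\circ M$, $(iii)\Rightarrow(ii)$ since $r_{M,M}$ is a non-zero endomorphism, and $(ii)\Rightarrow(i)$ by Theorem \ref{theorem:analogue of KKKO} — so your two-step application of the theorem and your justification of the degree-zero endomorphism computation via the Morita equivalence with KLR algebras merely make explicit what the paper leaves implicit. The only stray point is your unused side claim that $r_{M,M}\in k\,\mathrm{id}_{M\circ M}$ if and only if it is an isomorphism (the ``if'' direction is not true in general), but nothing in your argument depends on it.
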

\begin{proof}
If $M$ is a real simple module then, by definition, $M\circ M$ is simple and so $(i)$ implies $(iii)$. It is immediate that $(iii)$ implies $(ii)$. Finally, $(ii)$ implies $(i)$ using Theorem \ref{theorem:analogue of KKKO}.
\end{proof}
\begin{corollary}
For $\beta \in \thetani$, let $M$ be a real simple $\vvb$-module. Then, for $n \in \mathbb{N}$, $M^{\circ n}$ is a real simple $\w_{n\beta}$-module.
\end{corollary}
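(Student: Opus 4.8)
The plan is to induct on $n$, using Corollary~\ref{corollary:TFAE real simple, identity map} to reduce the whole statement to one about renormalized $R$-matrices. Since $\thetani$ is closed under addition we have $n\beta \in \thetani$ and $M^{\circ n}$ is a finite-dimensional $\w_{n\beta}$-module, so the assertion makes sense; and by Corollary~\ref{corollary:TFAE real simple, identity map} it suffices to prove $r_{M^{\circ n},\,M^{\circ n}} \in k\,\textrm{id}$ for every $n$.

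The one ingredient not already in hand is the multiplicativity of the renormalized $R$-matrix: for modules $L, X, Y$,
\[
r_{L,\,X\circ Y} \;=\; (\textrm{id}_X\circ r_{L,Y})\circ(r_{L,X}\circ\textrm{id}_Y), \qquad r_{X\circ Y,\,L} \;=\; (r_{X,L}\circ\textrm{id}_Y)\circ(\textrm{id}_X\circ r_{Y,L}),
\]
\emph{provided} the composite on the right is non-zero (in which case it is automatically a non-zero homogeneous map of the correct degree). This is the renormalized form of the commutative diagrams of Section~\ref{section:R-Matrices}: one puts a spectral parameter on $L$, observes that $R_{L_z,\,X\circ Y} = (\textrm{id}_X\circ R_{L_z,Y})(R_{L_z,X}\circ\textrm{id}_Y)$ by the same argument as the parameter-free diagram (the twist $\psi_z$ being compatible with the elements $\varphi_a$ of Lemma~\ref{lemma:varphi satisfies the braid rels}), compares orders of vanishing at $z=0$, and uses that the order of a product of power series equals the sum of the orders exactly when the product of leading coefficients is non-zero. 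This is the VV-algebra analogue of the corresponding lemmas in \cite{KKKI} and \cite{SimplicityOfHeadsSocles}.

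With this in place I would prove by induction on $n\ge 1$ the sharper assertion that $M^{\circ n}$ is simple and $r_{M,\,M^{\circ n}},\,r_{M^{\circ n},\,M} \in k^\times\,\textrm{id}$. The base case $n=1$ is the hypothesis together with Corollary~\ref{corollary:TFAE real simple, identity map}, which gives $r_{M,M}\in k^\times\,\textrm{id}$. For the inductive step, $M^{\circ n}$ is simple and $r_{M,M}\in k^\times\,\textrm{id}$, so Theorem~\ref{theorem:analogue of KKKO} applied to the pair $(M,\,M^{\circ n})$ gives that $M\circ M^{\circ n}$ has simple head isomorphic to $\textrm{im}(r_{M,\,M^{\circ n}})$; since $r_{M,\,M^{\circ n}}$ is an isomorphism this image is all of $M^{\circ n}\circ M$, so $M^{\circ(n+1)} = M\circ M^{\circ n}$ coincides with its simple head and hence is simple. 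The multiplicativity identity then gives $r_{M,\,M^{\circ(n+1)}} = (\textrm{id}_M\circ r_{M,\,M^{\circ n}})(r_{M,M}\circ\textrm{id}_{M^{\circ n}})$, which is a composite of two non-zero scalar multiples of the identity; this both certifies that the identity holds and shows $r_{M,\,M^{\circ(n+1)}}\in k^\times\,\textrm{id}$, and $r_{M^{\circ(n+1)},\,M}$ is treated symmetrically using the second identity.

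Finally, iterating the multiplicativity identity in the first variable, anchored at $r_{M,\,M^{\circ n}}\in k^\times\,\textrm{id}$, upgrades this to $r_{M^{\circ n},\,M^{\circ n}}\in k^\times\,\textrm{id}$, and applying the implication (ii) $\Rightarrow$ (i) of Corollary~\ref{corollary:TFAE real simple, identity map} to $M^{\circ n}$ shows $M^{\circ n}$ is real simple. The only genuinely non-formal point is the multiplicativity lemma for renormalized $R$-matrices; it is standard, transferring from the KLR setting essentially word for word via Lemma~\ref{lemma:varphi satisfies the braid rels} with only routine care about grading shifts, after which everything is a direct consequence of Theorem~\ref{theorem:analogue of KKKO} and Corollary~\ref{corollary:TFAE real simple, identity map}.
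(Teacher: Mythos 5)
Your argument is correct and follows essentially the same route as the paper: both decompose $r_{M^{\circ n},M^{\circ n}}$ into copies of $r_{M,M}$ via the compatibility (commutative diagram / multiplicativity) property of the $R$-matrices and then conclude with Corollary \ref{corollary:TFAE real simple, identity map}, arguing inductively. Your write-up is in fact slightly more careful than the paper's, which applies these diagrams to the renormalized $r$-matrices without spelling out the non-vanishing proviso that you make explicit.
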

\begin{proof}
We have the following commutative diagram
\begin{center}
\begin{tikzpicture}
\matrix (m) [matrix of math nodes, row sep=4em,
column sep=5em, text height=1.5ex, text depth=0.25ex]
{M\circ M\circ M & M\circ M\circ M \\
  & M\circ M\circ M \\};
\path[-stealth]
(m-1-1) edge node [above] {$r_{M,M}$} (m-1-2)
        edge node [left] {$r_{M,M\circ M}$} (m-2-2)
(m-1-2) edge node [right] {$r_{M,M}$} (m-2-2);
\end{tikzpicture}
\end{center}
and, using Corollary \ref{corollary:TFAE real simple, identity map}, we have that $r_{M,M} \in k\hspace{0.2em}\textrm{id}_{M,M}$. Therefore $r_{M,M\circ M} \in k\hspace{0.2em}\textrm{id}_{M^{\circ 3}}$ and similarly $r_{M\circ M,M} \in k\hspace{0.2em}\textrm{id}_{M^{\circ 3}}$. We now repeat this to obtain $r_{M\circ (M\circ M)}\circ r_{M\circ (M\circ M)}=r_{M\circ M,M\circ M} \in k\hspace{0.2em}\textrm{id}_{M^{\circ 4}}$. By Corollary \ref{corollary:TFAE real simple, identity map} this implies that $M\circ M$ is a real simple module. Continuing inductively yields the result.
\end{proof}
\section{Quantum Cluster Algebra Structure on $\btgh$}\label{section:conjectures}
Recall that we fix a quiver $\Gamma$ with vertex set $I$ as described in \ref{section:vv definition}. To emphasise, we repeat that we do this in such a way that $\pm q, \pm p \not\in I$. Consider the $k$-algebra
\begin{equation*}
\vvn:=\bigoplus_{\nu} \vv
\end{equation*}
where the direct sum runs over all $\nu \in \thetani$ with $|\nu|=2n$. Then set
\begin{equation*}
\w_I:=\bigoplus_{n \in \mathbb{Z}_{\geq 0}} \vvn.
\end{equation*}
We will often write $\w$ instead of $\w_I$ keeping in mind that we have already fixed $I$ in the way described above. Let $v$ be an indeterminate and write $\mathcal{A}:=\mathbb{Z}[v,v^{-1}]$. Let $K_0(\w(n)\textrm{-proj})$ denote the Grothendieck group of the category of finitely generated graded projective $\w(n)$-modules and set
\begin{equation*}
K_0(\w\textrm{-proj}):=\bigoplus_{n \in \mathbb{Z}_{\geq 0}} K_0(\w(n)\textrm{-proj}). 
\end{equation*}
We recall that $K_0(\w\textrm{-proj})$ is an $\mathcal{A}$-module, where $v$ and $v^{-1}$ act by shifting the grading by $1$ and $-1$, respectively.
\\
\\
We now recall the algebra $\btg$, defined by Enomoto and Kashiwara, following \cite{EnomotoKashiwara}. Put $K:=\mathbb{Q}(v)$. We consider $I$ to be an indexing set for the set of simple roots of the associated Lie algebra $\lie$ and we let $Q$ be the free $\mathbb{Z}$-module with basis $\{ \alpha_i : i \in I\}$. We take a symmetric bilinear form $(\cdot \hspace*{0.2em},\cdot):Q\times Q \longrightarrow \mathbb{Z}$ such that $(\alpha_i,\alpha_i)/2 \in \mathbb{Z}_{>0}$ for all $i$ and such that $(\alpha_i^\vee,\alpha_j)\in \mathbb{Z}_{\leq 0}$ when $i \neq j$. As usual we have $\alpha_i^\vee :=2\alpha_i/(\alpha_i,\alpha_i)$.
\begin{defn}
The algebra $\btg$ is the $K$-algebra generated by elements $E_i$, $F_i$ and invertible elements $T_i$, for $i \in I$, which satisfy the following relations.
\begin{enumerate}
\item $T_iT_j=T_jT_i$ for all $i,j \in I$,
\item $T_{\theta(i)}=T_i$ for all $i \in I$,
\item $T_iE_jT_i^{-1}=v^{(\alpha_i+\alpha_{\theta(i)},\alpha_j)}E_j$ and $T_iF_jT_i^{-1}=v^{(\alpha_i+\alpha_{\theta(i)},-\alpha_j)}F_j$ for all $i,j \in I$,
\item $E_iF_j=v^{-(\alpha_i,\alpha_j)}F_jE_i+\delta_{i,j}+\delta_{\theta(i),j}T_i$ for all $i,j \in I$,
\item the $E_i$ and the $F_i$ satisfy the Serre relations; for all $i\neq j \in I$,
\begin{equation*}
\sum_{k=0}^b(-1)^kE_i^{(k)}E_jE_i^{(b-k)}=0, \quad \sum_{k=0}^b(-1)^kF_i^{(k)}F_jF_i^{(b-k)}=0,
\end{equation*}
where $b=1-(\alpha_i^\vee,\alpha_j)$ and
\begin{equation*}
E_i^{(k)}=E_i^k/[k]_i!, \hspace*{0.3em} F_i^{(k)}=F_i^k/[k]_i!, \hspace*{0.3em}
[k]_i=(v_i^k-v_i^{-k})/(v_i-v_i^{-1}), \hspace*{0.3em}[k]_i!=[1]_i\cdots [k]_i.
\end{equation*}
\end{enumerate}
\end{defn}
Note that $\btg\cong U_v^-(\lie) \otimes K[T_i^{\pm 1}; i \in I] \otimes U_v^+(\lie)$. Now let $\lambda \in P_+$, where
\begin{equation*}
P_+:=\{ \lambda \in \Hom(Q, \mathbb{Q}) \mid \langle \alpha_i^\vee,\lambda \rangle \in \mathbb{Z}_{\geq 0} \textrm{ for any } i \in I \},
\end{equation*}
be a dominant integral weight such that $\theta(\lambda)=\lambda$. Enomoto and Kashiwara showed that $\btg$ has a simple highest weight module $\vtl$, with highest weight vector $\phi_\lambda$ associated to the highest weight $\lambda$, which is unique up to isomorphism. That is, $E_i\phi_\lambda=0$, for all $i \in I$, and $T_i\phi_\lambda=v^{(\alpha_i,\lambda)}\phi_\lambda$, for all $i \in I$.
\\
\\
In our fixed setting ($\pm p, \pm q \not\in I$) note that we have either $\lie=\glinfty \oplus \glinfty$ or $\lie=A^{(1)}_r \oplus A^{(1)}_r$, depending on whether or not $p$ is a primitive root of unity. Furthermore, we consider the case $\lambda=0$ and we have
\begin{equation*}
(\alpha_i,\alpha_j)=\left\lbrace
\begin{array}{ll}
-|i\rightarrow j|-|j\rightarrow i| & \textrm{ if }i\neq j \\
2 & \textrm{ if }i=j
\end{array}
\right.
\end{equation*}
where, as before, $|i\rightarrow j|$ denotes the number of arrows in $\Gamma$ which have origin $i$ and target $j$.
\begin{remark}
Since $\vtz$ is a simple $\btg$-module there is an isomorphism of left modules
\begin{equation*}
\vtz \cong \btg/\mathfrak{m}
\end{equation*}
where $\mathfrak{m}$ is a maximal left ideal in $\btg$. Let us denote $\btgh:= \btg/\mathfrak{m}$.
\end{remark}
Varagnolo and Vasserot \cite{VaragnoloVasserot} proved, in full generality, that $K\otimes_{\mathcal{A}} K_0(\w\textrm{-proj})$ is a left $\btg$-module and that there is a $\btg$-module isomorphism 
\begin{equation*}
K\otimes_{\mathcal{A}} K_0(\w\textrm{-proj}) \cong \vtl.
\end{equation*}
Furthermore, $K\otimes_{\mathcal{A}} K_0(\w\textrm{-proj})$ has an $\mathcal{A}$-basis in which basis elements correspond to isomorphism classes of certain simple graded $\w$-modules. Since there exists Morita equivalence between VV algebras and KLR algebras in our fixed setting it follows that the basis elements are in correspondence with certain simple graded $\textbf{R}$-modules, where
\begin{equation*}
\textbf{R}:=\bigoplus_{m \in \mathbb{Z}_{\geq 0}} \textbf{R}(m), \quad \quad \quad \textbf{R}(m):=\bigoplus_{\tilde{\nu} } \subklr.
\end{equation*}
The second direct sum runs over all $\tilde{\nu} \in \mathbb{N}I^+$ such that $|\tilde{\nu}|=m$. We have shown that $\w\textrm{-proj}$ is closed under our newly-defined convolution product so that $K\otimes_{\mathcal{A}} K_0(\w\textrm{-proj})$, and hence $\btgh$, is a $\mathbb{Q}(v)$-algebra. We conjecture that $K\otimes_{\mathcal{A}}K_0(\w\textrm{-proj})$ is a quantum cluster algebra.
\begin{conjecture}
In the fixed setting described above $K\otimes_{\mathcal{A}} K_0(\w\textrm{-proj})$, and hence $\btgh$, has the structure of a quantum cluster algebra.
\end{conjecture}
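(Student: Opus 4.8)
Towards this, the natural plan is to transport the monoidal categorification of quantum cluster algebras of Kang--Kashiwara--Kim--Oh \cite{KKKO}, \cite{KKKOII} to the category $\w\textrm{-gmod}$, using the $R$-matrices and the simplicity-of-heads/socles results established above (Theorem~\ref{theorem:analogue of KKKO}, Corollary~\ref{corollary:TFAE real simple, identity map}) as the categorical input. A first reduction: because the convolution product on $\w\textrm{-proj}$ is compatible with that on $\textbf{R}\textrm{-proj}$ and $K_0(\w\textrm{-gmod})\cong K_0(\textbf{R}\textrm{-gmod})$ as $\mathcal{A}$-algebras, one may set up the combinatorics on the KLR side, where $\lie=\glinfty\oplus\glinfty$ or $A^{(1)}_r\oplus A^{(1)}_r$, and then carry the resulting cluster structure back through the Morita equivalence of \cite{Walker}; the payoff ``cluster monomials $=$ real simple self-dual modules'' is then phrased on the VV side, where the theorem of Varagnolo--Vasserot identifies such modules with a distinguished basis of $\vtz\cong\btgh$. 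As in \cite{KKKO}, one should first pass to the $\mathbb{Z}[v^{\pm 1/2}]$-form and replace $\circ$ by the twisted product built from the $\Lambda$-invariants of the renormalized $R$-matrices $r_{M,N}$; it is this quasi-commutative algebra that is expected to carry the quantum cluster structure.

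The heart of the argument is to produce an initial quantum seed. Following Geiss--Leclerc--Schr\"oer \cite{GeissLeclercSchroer} and \cite{KKKO}, I would fix a reduced word adapted to the $\theta$-symmetry of $\Gamma$ (for a suitable Weyl group element, or, in the infinite-type case, exhausting the algebra by a chain of such elements) and take the associated cuspidal / determinantial modules $\{M_k\}_{k\in J}$, $J=J_{\textrm{fr}}\sqcup J_{\textrm{ex}}$, as the candidate cluster. One must then check that (a) each $M_k$ is real, which by Corollary~\ref{corollary:TFAE real simple, identity map} reduces to $r_{M_k,M_k}\in k\,\textrm{id}$; (b) the $M_k$ pairwise commute, i.e.\ $M_i\circ M_j$ is simple and $r_{M_i,M_j}r_{M_j,M_i}$ is a scalar; and (c) the integer matrix $\Lambda=(\Lambda_{ij})$ recording the homogeneous degrees of the $r_{M_i,M_j}$, together with an exchange matrix $\widetilde{B}$ read off from the word, form a compatible pair in the sense of Berenstein--Zelevinsky \cite{BerensteinZelevinsky}, with the classes $[M_k]$ (after inverting the frozen ones) generating $K_0$.

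Given the seed, mutation in a direction $k\in J_{\textrm{ex}}$ amounts to exhibiting a simple module $M_k'$ and, up to grading shifts, two short exact sequences, schematically
\begin{equation*}
0 \longrightarrow \bigcirc_{b_{ik}>0} M_i^{\circ b_{ik}} \longrightarrow M_k\circ M_k' \longrightarrow \bigcirc_{b_{ik}<0} M_i^{\circ(-b_{ik})} \longrightarrow 0
\end{equation*}
and its reflection with $M_k\circ M_k'$ replaced by $M_k'\circ M_k$. Here $M_k'$ is defined as the head of an appropriate convolution product of the $M_i$ ($i\neq k$), which is simple by Theorem~\ref{theorem:analogue of KKKO}; the two exact sequences then come from the kernel--image decomposition $[X\circ Y]=[\textrm{ker}(r_{X,Y})]+[\textrm{im}(r_{X,Y})]$ of the renormalized $R$-matrix, once one knows that the relevant $R$-matrix degrees and $\Lambda$-invariants add up correctly. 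Passing to $K_0$ yields precisely the quantum exchange relation, and real simplicity of all resulting cluster monomials follows from the stability of real simple modules under $\circ$ proved at the end of Section~\ref{section:R-Matrices}.

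The main obstacle, and the reason this is left as a conjecture, is the quantitative $R$-matrix calculus that \cite{KKKI}, \cite{KKKO} develop on the KLR side: biadditivity of the invariants $\Lambda(M,N)$ and $\mathfrak{d}(M,N)$ under $\circ$, the identity relating $\Lambda(M,N)+\Lambda(N,M)$ to a homological pairing, and the behaviour of these invariants under passing to heads and socles. On the KLR side these rest ultimately on the geometric ($\textrm{Ext}$-computation, quiver-variety) realization of $\textbf{R}\textrm{-gmod}$, and transporting them through the Morita equivalence of \cite{Walker} requires care, since the generator $\pi$ and the twist $x_1\mapsto -x_1$ change degrees and socle filtrations. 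One must also pin down precisely which quotient $\btgh=\btg/\mathfrak{m}$ is and what role the fixed highest weight $\lambda=0$ plays --- in particular whether the $\theta$-fixed structure halves the rank of the cluster algebra in a controlled way and what the frozen part $J_{\textrm{fr}}$ should be; an alternative, possibly shorter, route is to identify $\btgh$ directly with (a colimit of) the algebras $\mathcal{A}_{v}(\mathfrak{n}(w))$ of \cite{GeissLeclercSchroer} and invoke their result, but this identification is itself not obvious in the presence of the convolution product constructed here. Once these inputs are secured, verification of the cluster axioms should follow the Geiss--Leclerc--Schr\"oer pattern essentially verbatim.
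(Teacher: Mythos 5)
This statement is posed as a conjecture, and the paper contains no proof of it --- only the supporting observations that $K_0(\w\textrm{-gmod})\cong K_0(\textbf{R}\textrm{-gmod})\cong \avn_{\mathbb{Z}[v^{\pm 1}]}$, that the induced subcategories $\mathscr{C}^B_w$ should inherit the Kang--Kashiwara--Kim--Oh quantum monoidal seeds from $\mathscr{C}_w$, and a single worked mutation example. Your plan follows essentially the same route the paper envisions (transporting the KKKO seeds and exchange sequences through the Morita equivalence, with Theorem~\ref{theorem:analogue of KKKO} and Corollary~\ref{corollary:TFAE real simple, identity map} as the categorical input), and your identification of the missing quantitative $R$-matrix invariant calculus and of the unresolved passage from the subalgebras $\avnw$ to all of $\btgh$ correctly pinpoints why the paper leaves this as a conjecture rather than a theorem.
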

In \cite{KKKO}, \cite{KKKOII} the authors show that $\mathscr{C}_w$ is a monoidal categorification of $\mathcal{A}_v(\mathfrak{n}(w))$ as a quantum cluster algebra, where $\mathscr{C}_w$ is a certain subcategory of $\textbf{R}$-gmod labelled by $w \in W$ (here $W$ denotes the corresponding Weyl group). This means there exists a set of simple modules $\{M_k\}_{k \in J}$ in $\mathscr{C}_w$, labelled by a finite index set $J=J_{\textrm{fr}}\sqcup J_{\textrm{ex}}$ which has frozen and exchangeable parts, together with a $J\times J$ skew-symmetric matrix $L$ and a $J\times J_{\textrm{ex}}$ matrix $\widetilde{B}$, with a skew-symmetric principal part, which both have entries in $\mathbb{Z}$ and which satisfy certain conditions. This data is called a \textit{quantum monoidal seed} in $\mathscr{C}_w$ and will correspond to the initial seed of the quantum cluster algebra. Furthermore, we can mutate in direction $k\in J_{\textrm{ex}}$; we replace $M_k$ with a simple module $M_k^\prime$ which fits into two certain short exact sequences, using $R$-matrices, so that when we view the Grothendieck group we have a quantum cluster algebra type relation. We also replace $L$, $\widetilde{B}$ with matrices $\mu_k(L)$, $\mu_k(\widetilde{B})$ respectively. Moreover we are able to mutate repeatedly in every direction $k\in J_{\textrm{ex}}$ and there is an algebra isomorphism $\mathbb{Z}[v^{\pm \frac{1}{2}}]\otimes_{\mathbb{Z}[v^{\pm 1}]} K_0(\mathscr{C}_w) \cong \mathcal{A}_v(\mathfrak{n}(w))$. In particular the Grothendieck ring of $\mathscr{C}_w$ has a quantum cluster algebra structure. They show that the simple modules in the quantum monoidal seed are given by certain modules called \textit{determinantial modules}. They define the determinantial modules to be the simple modules corresponding to the quantum unipotent minors under the isomorphism $K_0(\mathscr{C}_w) \cong \mathcal{A}_v(\mathfrak{n}(w))_{\mathbb{Z}[v^{\pm 1}]}$. By the work of Gei{\ss}-Leclerc-Schr{\"o}er \cite{GeissLeclercSchroer} the quantum unipotent minors are the cluster variables in the initial seed of the algebra $\mathcal{A}_v(\mathfrak{n}(w))$ and lie in the upper global basis.
\\
\\
In this paper we have shown that the Grothendieck group of the category of finitely generated graded $\w$-modules has a $\mathbb{Z}[v,v^{-1}]$-algebra structure which is compatible with the algebra structure on the Grothendieck group of the category of finitely generated graded $\textbf{R}$-modules. In other words, we have the following algebra isomorphsims
\begin{equation*}
K_0(\w\textrm{-gmod})\cong K_0(\textbf{R}\textrm{-gmod})\cong \avn_{\mathbb{Z}[v^{\pm 1}]}.
\end{equation*}
Consequently, there exist certain subcategories of $\w\textrm{-gmod}$ which yield monoidal categorifications of the subalgebras $\avnw_{\mathbb{Z}[v^{\pm 1}]}$, as quantum cluster algebras. In particular, the Grothendieck group of $\mathscr{C}^B_w$ has a quantum cluster algebra structure, where $\mathscr{C}^B_w$ is the full subcategory of $\w\textrm{-gmod}$ consisting of modules induced from $\mathscr{C}_w$, and there is an algebra isomorphism $K_0(\mathscr{C}_w^B)\cong K_0(\mathscr{C}_w)$. Note that $\mathscr{C}^B_w$ is closed under the convolution product; for $M=\textrm{Ind}(P)$, $N=\textrm{Ind}(Q) \in \mathscr{C}^B_w$, where $P,Q \in \mathscr{C}_w$, we have $M\circ N=\textrm{Ind}(P)\circ \textrm{Ind}(Q)=\textrm{Ind}(P\circ Q)$, and indeed $P\circ Q \in \mathscr{C}_w$ since $\mathscr{C}_w$ is closed under the convolution product. We claim that a quantum monoidal seed in $K_0(\mathscr{C}_w^B)$ is precisely that induced from $K_0(\mathscr{C}_w)$.
\\
\\
We end this subsection with an example to demonstrate how the mutation process should take place in the category of VV algebra modules.
\begin{example}
Let $\beta=\lambda+\lambda^{-1}, \gamma=p^2\lambda+p^{-2}\lambda^{-1}, \delta=p^4\lambda+p^{-4}\lambda^{-1} \in \thetani$. Then let $L(0),L(2),L(4)$ be the two-dimensional simple $\w_\beta,\w_\gamma,\w_\delta$-modules, respectively, from Examples \ref{example:VV example} and \ref{example:VV example II}. So, for example, $L(0)$ is the two-dimensional $\vvb$-module spanned by $e(0),\pi e(0)$. Similarly for $L(2)$ and $L(4)$. One can check that $L(0)\circ L(2)$ and $L(2)\circ L(0)$ are both length two modules so that each has a simple head and simple socle. We write $L(0)\diamond L(2)$ to denote the head of $L(0)\circ L(2)$. Similarly, $L(2)\diamond L(0)$ denotes the head of $L(2)\circ L(0)$.
\\
\\
Put $M(0):=L(0)$, $M(2):=L(0)\diamond L(2)$, $M(4):=L(2)\diamond L(0)$.
\\
\\
$M(2)$ has basis $\{e(e(\lambda)\otimes e(p^2\lambda)),\pi(e(\lambda)\otimes e(p^2\lambda)),\sigma_1\pi(e(\lambda)\otimes e(p^2\lambda)),\pi\sigma_1\pi(e(\lambda)\otimes e(p^2\lambda))\}$.
\\
\\
$M(4)$ has basis $\{ e(e(p^2\lambda)\otimes e(\lambda)),\pi(e(p^2\lambda)\otimes e(\lambda)),\sigma_1\pi(e(p^2\lambda)\otimes e(\lambda)),\pi\sigma_1\pi(e(p^2\lambda)\otimes e(\lambda)) \}$.
\\
\\
We have a short exact sequence
\begin{equation*}
0\longrightarrow M(4)\longrightarrow M(0)\circ L(2) \xrightarrow{r_{M(0),L(2)}} M(2) \longrightarrow 0.
\end{equation*}
We have therefore mutated in direction $L(0)$ and the new module which have swapped $L(0)$ for is $M(0)^\prime=L(2)$.
\end{example}
\subsection{Convolution product in other settings.}
Throughout this paper we have imposed the restriction $p,q \not\in I$. We now want to know if it is possible to define a convolution product on modules over VV algebras in general, or in cases where we do not have these restrictions. We could, for example, again try to use the Morita equivalences established in \cite{Walker} and define a product using the convolution product for KLR algebras. However we soon encounter problems when using this method. For example, suppose we assume $q \in I$, $p \not\in I$ and we permit only those $\nu \in \thetani$ in which the coefficient of $q$ is 1. By \cite{Walker}, Theorem 2.43 we have Morita equivalence between $\vv$ and $\klrplus\otimes_{k[z]} A$. Here $A$ is the path algebra of the quiver consisting of two vertices and two arrows where each vertex is the source of one arrow and the target of the other. Defining the convolution product for VV algebra modules via this Morita equivalence would mean that the resulting product would be a module over a VV algebra associated to some $\nu$ in which the coefficient of $q$ is equal to 2. In other words, the product would not be closed when considering subcategories of $\w\textrm{-Mod}$ with the stated restrictions on the multiplicity of $q$. The same comments can be made about the other Morita equivalences established in \cite{Walker} and so it appears that some other method should be used in order to construct a product in these settings.
\subsection{Categorification of $\btg$.}
Recall that Varagnolo and Vasserot proved there is a $\btg$-module isomorphism $K\otimes_{\mathcal{A}} K_0(\w\textrm{-proj}) \cong \btg/\mathfrak{m}$, where $\mathfrak{m}$ is a maximal left submodule of $\btg$. There is a natural surjection of $\btg$-modules $\pi:\btg \twoheadrightarrow \btg/\mathfrak{m}$. Does there exist an algebra whose module category categorifies $\btg$ as a $\mathbb{Q}(v)$-algebra? 

\bibliography{C:/Users/Ruari/Dropbox/MyStyleFile/MyBibliography}
\bibliographystyle{amsalpha}
\addcontentsline{toc}{section}{References}

%\includepdf[pages={-}]{Notes.pdf}

\end{document}